\newcommand{\field}[1]{\mathbb{#1}}
\newcommand{\R}{\field{R}}
\def\S{\ifmmode{\rm I\mkern-8.1mu
S\mkern1mu}\else{\rm I\kern-.18em R\hskip1pt\
}\fi\relax}
\def\D\theta ij#1{\dis \frac{\partial #1}{\partial \theta_i^j}}
\def\sqr{{\hskip1pt\vcenter{\vbox{\hrule height.4pt
\hbox{\vrule width.4pt height4pt\kern4pt
\vrule width.4pt}
\hrule height.4pt}}}}
\def\cal{\mathcal}
\def\trans{\ifmmode{\rm \frown\mkern-16.8mu \vert
\mkern8mu}\else{$\frown\mkern-16.8mu\vert\mkern8mu$}\fi\relax}
\def\ap{\rightarrow}
\def\dis{\displaystyle}
\def\D{\Delta}
\def\dis{\displaystyle}
\def\a{\alpha}
\def\g{\gamma}
\def\G{\Gamma}
 \def\d{\delta}
\def\l{\lambda}
 \def\n{\nu}
\def\r{\rho}
\newtheorem{The}{Theorem}[section] \newtheorem{Pro}{Proposition}[section]
\newtheorem{Def}{Definition}[section] 
\newtheorem{Lem}{Lemma}[section]
\newtheorem{Rem}{Remark}[section]
\newtheorem{Not}{Notations}[section]
\newtheorem{Com}{Comment}[section]
\title{ Articulated  arm and special multi-flags (corrected version)  }
\begin{document}
\maketitle
 \begin{center}\author{F\textsc{ernand} PELLETIER}\footnote{Universit\'e de Savoie, Laboratoire de Math\'ematiques (LAMA)
Campus Scientifique,
73376 Le Bourget-du-Lac Cedex, France.
Mail: pelletier@univ-savoie.fr.}\& M\textsc{ayada} SLAYMAN \footnote{Lebanese University, Mathematics Department, Faculty of Sciences, Lebanon; Mail: mslayman@ul.edu.lb \hfill$\;\;{}$}
 \end{center}

\begin{abstract}
In this paper we give a kinematical illustration  of some distributions called special multi-flags distributions. Precisely we define the kinematic model  in angular coordinates of an articulated arm  constituted of a series of $(n+1)$  segments in  $\R^{k+1}$ and construct the special  multi-flag distribution associated to this model. \end{abstract}
{\bf Keywords}: Goursat distributions, car with $n$ trailers, special multi-flags distributions, articulated arm.\\
\\AMS classification: 53, 58, 70, 93.
\section{\bf Introduction }\label{intro}

The  kinematic evolution of a car towing $n$ trailers can be described by a  Goursat distribution  on the   configuration space  $M=\R^2\times
 (\mathbb{S}^1)^{n+1}$. 
 A Goursat distribution is a  rank-$(l-s)$ distribution  on a manifold M of dimension $l\geq2+s$,  such that each element of its flag of  Lie squares,
 $$D=D^s\subset D^{s-1}=[D^s,D^s]\subset\cdots\subset D^{j-1}=[D^{j},D^j]\subset\cdots\subset D^0=TM$$  is of codimension $1$ in the following one. 
	
Since 2000, Goursat distributions  were generalized in many works (\cite{KumpRub}, \cite{MonZhi3}
\cite{Mor5}, \cite{Mor6}, \cite{Mor8}, \cite{PasResp5}).	
 Special $k$-flags ($k\geq 2$),  which are considered to be extensions of Goursat flags,  were defined in \cite{KumpRub},\cite{Mor6}, and \cite{PasResp5} in several equivalent ways. All these approaches can be reduced to one transparent definition (see \cite{Ad}, \cite{ShYa}). A special $k$-flag of length $s$ on a manifold  $M$ of dimension $(s+1)k+1$ is a sequence of distributions
\begin{center} $D^s\subset D^{s-1}=[D^s,D^s]\subset\cdots\subset
 D^{j-1}=[D^{j},D^j]\subset\cdots\subset D^0=TM$ \end{center}
such that the  respective dimensions of   $D^s,\; D^{s-1},\;\cdots,\;D^0$
are $k+1,\;2k+1,\;\cdots,$ $(s+1)k+1$, for  $j=1,\cdots,s-1$, the Cauchy-characteristic subdistribution $L(D^j)$ of $D^{j}$ is included in $D^{j+1}$ of constant corank one,
$L(D^s)=0$, and there exists a completely integrable subdistribution $F \subset D^1$ of corank one in $D^1$. The integer $k$ is called width.\\

The purpose of this work is to show that the problem of modeling car towing $n$ trailers can be  generalized to  the problem of modeling kinematic problem for an “articulated arm" constituted of ($n+1$) segments in $\R^{k+1}$, such that to this model is naturally associated a special $k$-flag.

In the following, an “articulated arm" of length ($n+1$) is a series of $(n+1)$  segments $[M_{i};M_{i+1}]$, $i=0,\cdots,n-1$,  in $\R^{k+1}$,   keeping a constant length $l_i$, and the articulation occurs at  points $M_i$,  for $i=1,\cdots,n$.\\

It is proposed to study the kinematic  evolution of such a mechanical system under the constraint   that the velocity of each point $M_i$  is collinear with the segment $[M_i,M_{i+1}]$, for  $i=0,\cdots,n$. Such a system is also studied in \cite{Li} and is called  a "n-bar system". In this paper we define precisely the kinematic evolution  of this mechanical system in term of hyperspherical coordinates and we construct the special multi-flag  naturally associated to this model.\\

For $k=1$,  an articulated arm of length $n$ is a modeling problem of a car with $n$  trailers. (see \cite{Jea}). When the number of trailers is large,  this problem can be considered as an
approximation of the “nonholonomic  snake" in the plane (see \cite {Ro}  for instance).   For $k>1$ we can also consider  a  “snake"  in $\R^{k+1}$ (see \cite{Rod} for a complete description). Again, an articulated arm  of length $n$, for $n$ large,  can be considered as a discretization  of a nonholonomic snake  in $\R^{k+1}$.
For instance, in $\R^3$, some problems of  "towed cable" can  model in such a way (\cite{Mau}, \cite{Ro})
\\

In section $2$, we recall the classic context  of the car with $n$ trailers and its interpretation in terms of Goursat distribution. The articulated arm  system is developed in section $3$  and also we  show how to associate a special multi-flags to such a system in cartesian coordinates. In section $4$, we gives a version of the kinematic evolution of an articulated arm in terms of angular coordinates and we get  a generalization of the classical model  of the car with $n$ trailers. The last two sections are devoted to the proofs of the results.

\section{ \bf The car with n trailers }\label{contres}
In this  section we will recall  some fundamental results about  the system of the car with $n$ trailers and its relation with the Goursat distribution. All these results are now classical and can be found in a large number of papers as \cite{FLIE}, \cite{Jea}, \cite{MonZhi1}, \cite{PasResp1}, \cite{Sor1} and many others . \\
\subsection{\bf Notations and equations.}\label{dyna}${}$\\
A car with $n$ trailers is a configuration of  $(n+1)$ trailers in the
 $\R^2$-plane, denoted by $M_0,M_1,\cdots,M_n$,  and  keeping a constant length  between each two trailers. It is proposed to study the  kinematic evolution  of the trailer  $M_n$ with the constraint that the motion is controlled by the evolution of  $M_0$ which symbolize the car. We will use  the same  representation  as M. Fliess \cite{FLIE} and O.J. Sordalen \cite{Sor1} where the car is represented by two driving wheels connected by an axle. It's a kinematic problem with non integrable constraints (i.e. a nonholonomic system) due to the rolling without sliding of the wheels.
 The configuration space  of the system is characterized  by the two dimensional coordinates of $M_n$ and $(n+1)$ angles, whereas there are only two inputs, namely one tangential velocity and one angular velocity which represent the action on the steering wheel and on the accelerator of the car.\\
 Consider the system of the car with $n$ trailers and suppose that the distances $R_r$ between the  different trailers  are all equal to $1$. We choose  as a reference point of a body  $M_{n-r}$  the midpoint $m_r$ between the wheels; its  coordinates are denoted by $x_r$ and $y_r$ in a given cartesian frame of the plane; $\theta_r$ is the angle between the main axis of  $M_{n-r}$ and the $x$-axis of the frame.
 So, the set of all positions of the car with with $n$ trailers is  included in a $3(n+1)$-dimensional space.  This system is submitted to $2n$ holonomic links  which give, in the previous  space,  the $2n$ following equations:
\begin{equation}\label{etat}
\begin{array}{ccc}
x_{r}-x_{r-1}&=&\cos \theta_{r-1}\\
y_{r}-y_{r-1}&=&\sin \theta_{r-1}
\end{array}
\end{equation}

 The configuration  space of this problem is a submanifold  of  dimension $(n+3)$ which  is  parameterized  by
$q=(x_0, y_0, \theta_0, ..., \theta_n)$ where : \\
${}\;\;\;\bullet$ $(x_0, y_0)$ are the coordinates of the last trailer $M_n$.\\
${}\;\;\;\bullet$ $\theta_n$ is the orientation of the car (the trailer $M_0$) with respect to the  $x$-axis.\\
${}\;\;\;\bullet$ $\theta_r$, $0 \leq r\leq n-1$, is the orientation of the trailer $(n-r)$ with respect to the $x$-axis.\\
The configuration space can thus be identified to $\mathbb{R}^2 \times (\mathbb{S}^1)^{n+1}$. \\The velocity parameters are  $\dot{x_0}$, $\dot{y_0}$, $\dot{\theta}_{0}$, $\cdots$, $\dot{\theta}_{n}$. There are only two inputs, namely  the  “angular velocity" $w_n$ and  the “tangential velocity" $v_n$
of  the midpoint of the guiding wheels associated to the action of the car (see \cite{Jea}). \\
Assume that the contacts between the wheels and the ground are pure rolling, it is then submitted to the classical nonholonomic links:
\begin{equation}\label{cont}
\dot{x_{r}}\sin\theta_{r}-\dot{y_{r}}\cos\theta_{r} = 0
\end{equation}
 There are $(n+1)$ kinematic equality  constraints, one for each trailer.
In order to establish these constraints, we can represent the points $m_r$, $r=0, \cdots, n$, in the  complex plane, i.e, $m_{r}=x_{r}+iy_{r}$. The geometric constraint between two  consecutive trailers is written as:
\begin{center}
$m_{r}=m_{r-1}+e^{i \theta_{r-1}}$ \;\;\;\;\;for $r \ne 0$
\end{center}
By induction, we have the following equation:
\begin{equation}\label{geom}
m_{r}=m_{0}+\dis\sum_{l=0}^{r-1}e^{i\theta_{l}}
\end{equation}
The kinematic constraint of $M_{n-r}$ is :
$$\dot{m_{r}}=\lambda_{r}e^{i\theta_{r}}$$
which is equivalent to :
$${\cal I}(e^{(-i\theta_{r})}\dot{m_r})=0$$
 where ${\cal I}(z)$ denotes the  imaginary part of  $z$. Combining this  characterization with  the derivative of  (\ref{geom}) and using the  linearity of  ${\cal I}$, we obtain the kinematic constraints:
\begin{equation}
-\dot{x_0}\sin\theta_{r}+\dot{y_0}\cos\theta_{r}+
\Sigma_{j=0}^{r-1}\dot{\theta_{j}}\cos(\theta_{j}-\theta_{r})=0  \;\;\;\;\;\;\;r=0,\cdots, n
\end{equation}
Combining $\dot{m_{r}}=\lambda_{r}e^{i\theta_{r}}$ with the derivative of  $$|m_{r+1}-m_{r}|^2=1$$
we obtain
$$\lambda_{r}=\lambda_{r+1}(\cos\theta_{r+1}-\cos\theta_{r})$$
and by induction:
$$\lambda_{r}=\lambda_{n}\cos(\theta_{n}-\theta_{n-1})\cdots\cos(\theta_{r+1}-\theta_{r})$$
so
$$\dot{m_{r}}=\lambda_{n}(\dis\prod_{j=r+1}^{n}\cos(\theta_{j}-\theta_{j-1}))e^{i\theta_{r}}$$
 where $\lambda_{n}= v_{n}$ is the  tangential velocity of the car  $M_{0}$.
\\

{ \it The evolution of the system of car with $n$ trailers  can be given by the following controlled system with two controls $v_n$ (“tangential velocity") and $w_n$ (“normal velocity")} of $M_0$:
\begin{eqnarray}
\label{systcont}
\begin{cases}
\dot{x_0}= v_0\cos\theta_0 \cr
\dot{y_{0}}=v_{0}\sin\theta_0\cr
\dot{\theta}_{0}=v_{1}\sin (\theta_{1}-\theta_{0})\cr
\qquad \cdots\cr
\dot{\theta}_{r}=v_{r+1}\sin(\theta_{r+1}-\theta_{r})\cr
\qquad \cdots \cr
\dot{\theta}_{n-1}=v_{n}\sin(\theta_{n}-\theta_{n-1})\cr
\dot{\theta}_{n}=w_{n}\cr
\end{cases}
\end{eqnarray}

The  “tangential  velocity"  $v_r$ of the body  $M_{n-r}$ is given by :
\begin{center}
$v_r=\dis\prod^{n}_{j=r+1} cos(\theta_j-\theta_{j-1})v_n$
\end{center}
\subsection{\bf Goursat flag}${}$\\
Given a smooth distribution $D$ on a manifold $M$ we will use the standard notation $[D, D]$  to denote  the smooth distribution generated by the vector fields tangent to $D$ and the Lie brackets $[X,Y]$, of any pair $(X, Y)$  of vector fields tangent to $D$.

\begin{Def}
A Goursat flag of length $s$ on a manifold $M$ of dimension $l\geq s+2$ is a sequence of distributions on $M$
$$D^{s} \subset D^{s-1}\subset \cdots \subset D^{3}\subset D^{2}\subset
D^{1}\subset D^{0}=TM \;\;\;\; s\geq2\;\;\;\;\;\;\;\;\;\;\; {\it (F)}$$
satisfying the following Goursat conditions \\\\
$\begin{array}{ccc}
1)\; corang\; D^{i}= i & i=1, 2, \cdots, s\\
{}\\
\;\;2)\;D^{i-1}=[D^i, D^i]& i=1, 2, \cdots, s
\end{array}$ \;\;\;\; {\it (G)}
\end{Def}

Each $D^i(p)$ is a subspace of $T_pM$ of codimension $i$, for any point $p\in M$. It follows that $D^{i+1}(p)$ is a hyperplane in $D^i(p)$, for any $i=0, 1, \cdots, s-1$ and $p\in M$.\\
\begin{Def} We call  any distribution $D^i$ of corank $i\geq 2$ in a Goursat flag {\it(F)} a Goursat distribution.
 \end{Def}

 To each flag ${\it (F)}$ of Goursat distributions we associate a flag of “Cauchy-characteristic" subdistributions
 \begin{center} $L(D^s) \subset L( D^{s-1})\subset \cdots \subset L(D^{3})\subset L(D^{2})\subset
L(D^{1})$\;\;\;\;\;\;\;\;\;\; {\it (L)}\end{center}
where $L(D)$ is the  subdistribution  of  $D$ generated by  the set of vector fields  $X$  tangent to $ D$  such that  $[X,Y]$ is tangent to $D$ for all $Y$ tangent to  $D$. $L(D)$ is called the Cauchy-characteristic distribution of $D$.
\begin{Lem}(Sandwich lemma)\cite{MonZhi1}: Let $D$ be any Goursat distribution of corank $s\geq2$ on a manifold $M$, and $p$ be any point of $M$. Then \begin{center} $L(D)(p)\subset L([D,D])(p)\subset D(p)$,\end{center} with \;\; dim $L(D)(p)$= dim $D(p)-2$, \;\; dim $L([D,D])(p)$ = dim $D(p)-1$.
\end{Lem}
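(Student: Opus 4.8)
The plan is to realise both Cauchy-characteristic distributions as the pointwise radicals of two ``structure $2$-forms'' and then read off the inclusions and dimensions from elementary exterior linear algebra. Write $D=D^s$ and $[D,D]=D^{s-1}$, and choose on a neighbourhood of $p$ independent $1$-forms $\alpha_1,\dots,\alpha_s$ with $D^j=\{\alpha_1=\cdots=\alpha_j=0\}$ for $j=1,\dots,s$; this is possible since the $D^j$ form a flag of constant-corank distributions. The first step, from $d\alpha(X,Y)=X\alpha(Y)-Y\alpha(X)-\alpha([X,Y])$ together with the Goursat condition $[D^j,D^j]=D^{j-1}$, is the observation that $d\alpha_i$ vanishes on $D^j$ whenever $i\le j-1$, while $d\alpha_j$ does not vanish on $D^j$. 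In particular $\omega:=d\alpha_s|_{D}$ is nonzero, and a vector field $X$ tangent to $D$ lies in $L(D)$ exactly when $\alpha_i([X,Y])=0$ for all $i$ and all $Y$ tangent to $D$, which by the vanishing statement reduces to $\omega(X,Y)=0$; hence $L(D)(p)$ is the radical of $\omega$ at $p$. By the identical computation one step up, $L([D,D])(p)$ is the radical of $\omega':=d\alpha_{s-1}|_{[D,D]}$. Everything thus reduces to computing the ranks of these two forms.

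I would treat $\omega'$ first. Because $d\alpha_{s-1}$ is one of the forms $d\alpha_i$ with $i\le s-1$ that vanish on $D=D^s$, the $2$-form $\omega'$ vanishes on the hyperplane $D\subset[D,D]$ cut out by $\beta:=\alpha_s|_{[D,D]}$. A $2$-form vanishing on a hyperplane $\ker\beta$ is necessarily $\beta\wedge\lambda$ for some $1$-form $\lambda$, and since $\omega'\ne0$ (the Goursat condition for $D^{s-1}$) the forms $\beta,\lambda$ are independent, so $\omega'$ has rank $2$ and radical $\ker\beta\cap\ker\lambda$. This gives at once $\dim L([D,D])(p)=\dim[D,D](p)-2=\dim D(p)-1$, and, since the radical lies in $\ker\beta=D$, the inclusion $L([D,D])(p)\subset D(p)$.

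The core of the argument is the rank of $\omega=d\alpha_s|_{D}$, and this is the only place I expect real difficulty. The crucial input is $d^2\alpha_{s-1}=0$: write $d\alpha_{s-1}=\sum_{j\le s}\alpha_j\wedge\mu_j$ (possible because $d\alpha_{s-1}$ vanishes on $D$), differentiate once more, and restrict to $D$, where all $\alpha_j$ and all $d\alpha_i$ with $i\le s-1$ die; what survives is exactly the relation $\omega\wedge\lambda_0=0$ on $D$, where $\lambda_0:=\mu_s|_{D}$ is the restriction of the $1$-form $\lambda$ found above. One checks $\lambda_0\ne0$: otherwise $\lambda$ would vanish on the hyperplane $D=\ker\beta$, forcing $\lambda\parallel\beta$ and contradicting $\omega'\ne0$. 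A nonzero $2$-form $\omega$ with $\omega\wedge\lambda_0=0$ and $\lambda_0\ne0$ is divisible by $\lambda_0$, hence decomposable of rank $2$ with radical contained in $\ker\lambda_0$. This yields $\dim L(D)(p)=\dim D(p)-2$, and, since $\ker\lambda_0$ is precisely $L([D,D])(p)$ viewed inside $D(p)$, the remaining inclusion $L(D)(p)\subset L([D,D])(p)$.

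Finally I would record the constant-rank point that makes the radicals into honest distributions: as the Goursat conditions hold at every point, both $\omega$ and $\omega'$ have locally constant rank $2$, so their fibrewise radicals are smooth of constant dimension and coincide with the Cauchy-characteristic fibres $L(D)(p)$ and $L([D,D])(p)$ defined via vector fields. The single delicate step is extracting and exploiting $\omega\wedge\lambda_0=0$ from $d^2=0$; once decomposability of $\omega$ is in hand, both dimension counts and both inclusions fall out simultaneously.
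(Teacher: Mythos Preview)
The paper does not actually prove this lemma: it is stated with attribution to \cite{MonZhi1} and used as a black box, so there is no ``paper's own proof'' to compare against. Your argument stands on its own and is correct.

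Your route via Pfaffian $1$-forms and the radicals of the structure $2$-forms $\omega=d\alpha_s|_D$ and $\omega'=d\alpha_{s-1}|_{[D,D]}$ is clean and essentially the one Montgomery--Zhitomirskii themselves use. The key identifications $L(D)(p)=\mathrm{rad}\,\omega$ and $L([D,D])(p)=\mathrm{rad}\,\omega'$ are justified exactly as you say, and the computation of $\mathrm{rank}\,\omega'=2$ from $\omega'=\beta\wedge\lambda$ is immediate. The only genuinely delicate point is the one you flag: extracting $\omega\wedge\lambda_0=0$ from $d^2\alpha_{s-1}=0$. Your derivation is sound---writing $d\alpha_{s-1}=\sum_{j\le s}\alpha_j\wedge\mu_j$, differentiating, and restricting pointwise to $D(p)$ kills every $\alpha_j$ and every $d\alpha_i$ with $i\le s-1$, leaving precisely $d\alpha_s|_D\wedge\mu_s|_D=0$; and the identification $\mu_s|_D=\lambda|_D$ follows by restricting the defining equation for $\omega'$ first to $[D,D]$ and then to $D$. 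The nonvanishing of $\lambda_0$ and the divisibility step $\omega=\lambda_0\wedge\nu$ then give both the rank of $\omega$ and the inclusion $L(D)(p)\subset\ker\lambda_0=L([D,D])(p)$ at once. Nothing is missing.
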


It follows a relation between the Goursat flag and its flag of Cauchy-characteristic subdistributions:

$$\begin{array}{ccccccccccccccccc}
& &D^s& \subset &D^{s-1}&\subset &
\cdots&\subset&D^3&\subset&D^2&\subset&D^1&\subset&D^0\\
& & \cup& & \cup & & & & \cup& &\cup& \\
 L(D^s)& \subset& L( D^{s-1})&\subset& L(D^{s-2})&\subset &\cdots&\subset& L(D^{2})&\subset&
L(D^{1})\\
\end{array}$$

Each inclusion here is a codimension one inclusion of  subbundles of the tangent bundle. $L(D^i)$ is an involutive regular distribution on  $M$ of codimension $i+2$.

\subsection{\bf Goursat flag associated to the car with n trailers}\label{goursatcar}${}$\\
Let $f^n_r=\dis\prod^{n}_{j=r+1} cos(\theta_j-\theta_{j-1})$,\\
and $v_r=f_r^nv_n$ for $r=1, \cdots, n-1$ \\
The motion of the system associated to the car  is then characterized by the equation:
$$\dot{q}=w_nX^{1}_{n}(q)+v_nX^{2}_{n}(q)$$
It is a controlled system with controls $v_{n}$ and $w_n$,  ($v_{n}$ is the tangential velocity and $w_n$ is the angular velocity  as we have already seen at the beginning of the section).  Each trajectory of the kinematic evolution of the car towing $n$ trailers is an integral curve of the 2-distribution, on $\R^2 \times (\mathbb{S}^1)^{n+1}$, generated by:
$$\left\{
\begin{array}{ccl}
X^{1}_{n}&=&\dis\frac{\partial}{\partial \theta_n}\\
 & & \\
X^{2}_{n}&=&\cos{\theta_{0}} f_{0}^{n}\dis\frac{\partial}{\partial x}+ \sin{\theta_{0}} f_{0}^{n}\dis\frac{\partial}{\partial y}
+\sin(\theta_1-\theta_0)f_1^n\dis\frac{\partial}{\partial\theta_0}
+\cdots+\sin(\theta_{n}-\theta_{n-1})\dis\frac{\partial}{\partial\theta_{n-1}}
\end{array}
\right.$$

{\it  The distribution generated by  $\{X_{1}^{n}, X_{2}^{n}\}$, naturally associated to the system of the car with $n$ trailers, is a Goursat distribution.}

\section{ \bf Articulated arm}
The  purpose of this section is to construct  a distribution $\D$, of dimension $k+1$, naturally associated to an $(n+1)$ articulated arm, and  which generates a special $k$-flag of length $(n+1)$ on the configuration space ${\cal C}\equiv \R^{k+1}\times(\mathbb{S}^k)^{n+1}$. Moreover, the kinematic evolution of this arm is  an integral curve of $\D_n$. We begin by recalling the context of special multi-flag in the formalism of \cite{Ad}, \cite{Mor6}.

\subsection{\bf Special multi-flags}\label{multiflag}${}$\\

A special $k$-flag of length $s$ is a sequence
$$D=D^{s}\subset\hfill D^{s-1}\hfill\subset\cdots\subset D^{j}\subset\cdots\subset D^{1} \subset  D^{0}= TM$$
of distributions on a manifold $M$ of dimension $ (s+1)k+1$ which satisfies the following conditions:\\
(i) $D^{j-1}=[D^j,D^j]$\\
(ii) $ D^s,\; D^{s-1},\;\cdots,\; D^j,\;\cdots,\;D^{1},\;D^{0}$ are of respective ranks $k+1$, $2k+1$,$\cdots$, $sk+1$, $(s+1)k+1$.\\
(iii) Each Cauchy characteristic subdistribution $L(D^j)$ of $D^j$ is a subdistribution of constant corank one in each $D^{j+1}$, for $j=1,\cdots,s-1$, and $L(D^s)=0$.\\
(iv) there exists a completely integrable subdistribution $F\subset D^1$ of corank one in $D^1$.\\



\begin{Rem}\label{covaF} It should be remarked that the covariant subdistribution $F\subset D^1$ is uniquely determined by $D^1$ itself. This covariant subdistribution $F$ is completely described in \cite{Ad} and \cite{ShYa} where it is defined in terms of the annihilating Pfaffian system $(D^{1})^\bot \subset T^*M$ $($\cite{KumpRub}$)$. For a complete clarification on this fact see \cite{MorPel}. \\
\end{Rem}
\begin{Rem}\label{22} In  the following we mean by a special multi-flag distribution all distribution generating a special multi-flag.\end{Rem}

From the definition above, we obtain the following sandwich diagram:\\

 $ \begin{matrix}\label{sandwich}
D^{s}&\subset\hfill D^{s-1}\hfill&\subset\cdots\subset &D^{j}&\subset\cdots\subset&D^{1}        &\subset & D^{0}= TM\hfill\cr
                            \hfill\cup\hfill&\hfill\;\;\cup\hfill& \cdots
                                &\cup    &\cdots                         &\cup\hfill   &              &\cr
                             \hfill L(D^{s-1})&\subset L(D^{s-2})&\subset\cdots\subset  & L(D^{j-1})&\subset\cdots\subset& F\hfill&            &\cr
\end{matrix}$\\

All vertical inclusions in this diagram are of codimension one, while all horizontal inclusions are of codimension $k$. The squares built by these inclusions can be  perceived as certain sandwiches, i.e each “subdiagram"  number $j$ indexed by the upper left vertices $D^{j}$:\\

$ \begin{matrix}\label{sandwich}
D^{j}&\subset\hfill D^{j-1}\hfill\cr
                           \hfill\cup\hfill&\hfill\cup\hfill\cr
                            \hfill L(D^{j-1})&\subset L(D^{j-2})&\cr
\end{matrix}$\\

is called  sandwich number $j$.\\
We can read the length $s$ of the special $k$-flag by adding one to the total number of sandwiches in the sandwich diagram. \\

\begin{Rem}\label{sing} In a sandwich number $j$, at each point $x\in M$, in the $(k+1)$ dimensional vector space $D^{j-1}/L(D^{j-1})(x)$ we can look for the relative position of the $k$ dimensional subspace $L(D^{j-2})/L(D^{j-1})(x)$ and the $1$ dimensional subspace $D^{j}/L(D^{j-1})(x)$:

either  $L(D^{j-2})/L(D^{j-1})(x)\oplus D^{j}/L(D^{j-1})(x)=D^{j-1}/L(D^{j-1})(x)$

or $ D^{j}/L(D^{j-1})(x)\subset L(D^{j-2})/L(D^{j-1})(x)$.\\
We say that $x\in M$ is a {\bf regular point} if  the first situation is true in each  sandwich number $j$, for $j=1,\cdots,s$. Otherwise $x$ is called a {\bf singular point}. \\

The set of singular points in the context of an articulated arm is studied in \cite{Sla} and these results  will  be published in a future paper.
\end{Rem}

\subsection{\bf Special multi-flags and  articulated arm}\label{distdrap}${}$\\
The space  $(\R^{k+1})^{n+2}$, will be written as the product  $\R^{k+1}_0 \times \cdots \R^{k+1}_i \times\cdots \R^{k+1}_{n+1}$.  Let $x_i=(x_i^1,\cdots,x_i^{k+1})$  be the canonical coordinates on the space  $\mathbb{R}^{k+1}_i$ which is equipped with its  canonical  scalar product $<\;,\;>$. $(\R^{k+1})^{n+2}$ is equipped with its canonical scalar product too.\\

Consider an articulated arm of length ($n+1$) denoted  by $(M_0,\cdots M_{n+1})$. In this paper,  we assume that  the distances $l_i$  are all equal to $1$.
 On $(\R^{k+1})^{n+2}$, consider the vector fields:
  \begin{eqnarray}\label{normalvec}
{\cal Z}_i =\dis\sum_{r=1}^{k+1} (x_{i+1}^r-x_{i}^r)\frac{\partial}{\partial x_{i}^r} \textrm{ for }  i=0,\cdots,n
\end{eqnarray}
From our  previous assumptions (see  section \ref{intro}),  the kinematic evolution of the articulated arm is described by a controlled system:
\begin{eqnarray}\label{contsystgene}
\dot{q}=\dis\sum_{i=0}^n u_i{\cal Z}_i+\dis\sum_{r=1}^{k+1} u_{n+r}\dis\frac{\partial}{\partial x_{n+1}^r}
\end{eqnarray}
with the following constraints:\\
$||x_{i}-x_{i+1}||=1$ for $i=0\cdots n$ (see \cite{Li} chapiter 2).\\

Consider the map $\Psi_i (x_0,\cdots,x_{n+1})=||x_{i}-x_{i+1}||^2-1$. Then, the configuration space $\cal C$ is the set
\begin{eqnarray}\label{eqcontraint}
\{ (x_0,\cdots,x_{n+1}), \textrm{ such that } \Psi_i (x_0,\cdots,x_{n+1})=0 \textrm{ for } i=0,\cdots, n \}
\end{eqnarray}

 For $i=0,\cdots,n$, the vector field:
\begin{eqnarray}
{\cal N}_i=\dis\sum_{r=1}^{k+1} (x_{i+1}^r-x_{i}^r)[\frac{\partial}{\partial x_{i+1}^r}- \frac{\partial}{\partial x_{i}^r}]
\end{eqnarray}
is proportional to the gradiant of $\Psi_i$. So the tangent space  $T_q{\cal C}$ is the subspace of $T_q(\R^{k+1})^{n+2}$ which is orthogonal to ${\cal N}_i(q)$ for $i=0,\cdots,n$.\\

Denote by ${\cal E}$ the distribution generated by the vector fields $$\{ {\cal Z}_0,\cdots,{\cal Z}_n,\dis\frac{\partial}{\partial x_{n+1}^1},\cdots,\dis\frac{\partial}{\partial x_{n+1}^{k+1}}\}.$$

 \begin{Lem}\label{distribinduite}
Let $\D$  be the distribution on $\cal C$ defined by $\D(q)=T_q{\cal C}\cap {\cal E}$. Then $\D$ is a distribution of dimension $k+1$  generated by
$$(x_{n+1}^r-x_n^r)[\dis\sum_{i=0}^n\dis\prod_{j=i+1}^{n+1} A_j  {\cal Z}_i]+\dis\frac{\partial}{\partial x_{n+1}^r}  \textrm{ for } r=1\cdots k+1$$
 where $A_j(q)=-<{\cal N}_{j}(q),{\cal N}_{j-1}(q)>=<{\cal Z}_{j}(q),{\cal N}_{j-1}(q)>$ for $j=1,\cdots, n$ and $A_{n+1}=1$.\\
 \end{Lem}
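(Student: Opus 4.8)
The plan is to compute the intersection $\D(q)=T_q\mathcal{C}\cap\mathcal{E}(q)$ fiberwise, by writing a general element of $\mathcal{E}(q)$ in terms of the given generators and imposing the tangency conditions that cut out $T_q\mathcal{C}$. Fix $q\in\mathcal{C}$ and write a general member of $\mathcal{E}(q)$ as
$$X=\sum_{i=0}^n a_i\,\mathcal{Z}_i+\sum_{r=1}^{k+1}b_r\,\frac{\partial}{\partial x_{n+1}^r},$$
with unknown scalars $a_0,\dots,a_n,b_1,\dots,b_{k+1}$. Since $\mathcal{N}_l$ is proportional to the gradient of $\Psi_l$, the condition $X\in T_q\mathcal{C}$ is exactly $\langle X,\mathcal{N}_l\rangle=0$ for $l=0,\dots,n$. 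Thus the whole lemma reduces to solving this linear system for the $a_i,b_r$ and reading off the solution space.

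The key computational step is to evaluate the pairings of the generators against the $\mathcal{N}_l$, which is straightforward because of the block structure of $(\R^{k+1})^{n+2}$: writing $v_i^r=x_{i+1}^r-x_i^r$, the field $\mathcal{Z}_i$ is supported in the block $\R^{k+1}_i$, the field $\partial/\partial x_{n+1}^r$ in $\R^{k+1}_{n+1}$, and $\mathcal{N}_l$ has component $-v_l$ in block $l$ and $+v_l$ in block $l+1$. Comparing supports and using $\|v_l\|^2=1$ on $\mathcal{C}$, I would obtain $\langle\mathcal{Z}_l,\mathcal{N}_l\rangle=-1$, then $\langle\mathcal{Z}_{l+1},\mathcal{N}_l\rangle=\langle v_{l+1},v_l\rangle=A_{l+1}$, and all other $\langle\mathcal{Z}_i,\mathcal{N}_l\rangle=0$; likewise $\langle\partial/\partial x_{n+1}^r,\mathcal{N}_n\rangle=v_n^r=x_{n+1}^r-x_n^r$ while $\langle\partial/\partial x_{n+1}^r,\mathcal{N}_l\rangle=0$ for $l<n$. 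This is also where the two expressions for $A_j$ in the statement are identified, since $\langle\mathcal{Z}_j,\mathcal{N}_{j-1}\rangle=\langle v_j,v_{j-1}\rangle=-\langle\mathcal{N}_j,\mathcal{N}_{j-1}\rangle$.

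Substituting these pairings, the system $\langle X,\mathcal{N}_l\rangle=0$ becomes $-a_l+A_{l+1}a_{l+1}=0$ for $l=0,\dots,n-1$, together with $-a_n+\sum_{r}b_r(x_{n+1}^r-x_n^r)=0$. The first $n$ equations form a triangular recursion $a_l=A_{l+1}a_{l+1}$, which I would solve downward to get $a_i=\bigl(\prod_{j=i+1}^{n}A_j\bigr)a_n$ for $i=0,\dots,n$, while the last equation fixes $a_n=\sum_r b_r(x_{n+1}^r-x_n^r)$. Hence the $b_r$ are free and every solution is determined by them; choosing $b_r=\delta_{rs}$ gives, after using the convention $A_{n+1}=1$ so that $\prod_{j=i+1}^{n}A_j=\prod_{j=i+1}^{n+1}A_j$, exactly the $k+1$ fields displayed in the statement. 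Their $\partial/\partial x_{n+1}^s$-parts are linearly independent, so these fields form a basis of $\D(q)$, establishing both the claimed generators and $\dim\D(q)=k+1$.

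Finally, I would record that the dimension is constant, so that $\D$ is genuinely a rank-$(k+1)$ distribution: the linear map $\mathcal{E}(q)\to\R^{n+1}$, $X\mapsto(\langle X,\mathcal{N}_l\rangle)_l$, is surjective at every $q$ (the triangular form lets one solve for $a_n,a_{n-1},\dots,a_0$ in turn, independently of the values of the $A_j$), whence its kernel $\D(q)$ has dimension $\dim\mathcal{E}(q)-(n+1)=(n+k+2)-(n+1)=k+1$ uniformly. I do not expect a genuine obstacle here: the only care needed is the sign and index bookkeeping in the pairings and keeping the product ranges straight in the recursion. In particular no singular-point analysis enters, since surjectivity of the tangency map holds at every $q\in\mathcal{C}$.
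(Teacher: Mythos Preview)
Your proof is correct and follows essentially the same approach as the paper: write a general element of $\mathcal{E}$ in the given generators, impose orthogonality to each $\mathcal{N}_l$, obtain the triangular recursion $a_l=A_{l+1}a_{l+1}$ together with $a_n=\sum_r b_r(x_{n+1}^r-x_n^r)$, and read off the solution space. Your version is slightly more explicit (you compute each pairing and verify the identity $\langle\mathcal{Z}_j,\mathcal{N}_{j-1}\rangle=-\langle\mathcal{N}_j,\mathcal{N}_{j-1}\rangle$) and you add a clean surjectivity argument to confirm the rank is constantly $k+1$, which the paper leaves implicit.
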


\begin{proof}
Any vector field $X$ tangent to $\cal E$ can be written as:
$$X=\dis\sum_{i=0}^n \l_i{\cal Z}_i+\dis\sum_{r=1}^{k+1} \mu_r\dis\frac{\partial}{\partial x_{n+1}^r}$$
On the other hand, on $\cal C$, a vector fields $X$ is tangent to ${\cal C}$, if and only if $X$ is orthogonal to the vector fields ${\cal N}_0,\cdots,{\cal N}_n$.

\noindent For $i=0,\cdots, n-1$, each relation $<X,{\cal N}_i>=0$ is reduced to $<\l_{i+1}{\cal Z}_{i+1}+\l_i{\cal Z}_i,{\cal N}_i>=0$, which is equivalent to
\begin{eqnarray}\label{relat0n-1}
\l_i=\l_{i+1}A_{i+1}
\end{eqnarray}
Similarly, the relation $<X,{\cal N}_i>=0$ induces
\begin{eqnarray}\label{relan}
\l_n=\dis\sum_{r=1}^{k+1}\mu_r(x_{n+1}^r-x_n^r)
\end{eqnarray}
and from (\ref{relat0n-1}) and (\ref{relan})  we get $\l_i=\dis\prod_{j=i+1}^{n} A_i \l_n$,  for $i=0,\cdots n-1$.\\
\end{proof}

The properties of $\D$ are summarized  in the following result. (see also \cite{Li} chapiter 2))

\begin{The}\label{drap} ${}$
On ${\cal C}$, the distribution  $\D$ satisfies the following properties:
\begin{enumerate}
\item $\D$ is a  distribution of rank $k+1$.

\item The distribution $\D$ is a special $k$-flag  on ${\cal C}$ of length $(n+1)$. \\
\end{enumerate}
 \end{The}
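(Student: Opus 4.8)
The plan is to realise the flag as the tower of iterated Lie squares $\D = D^{n+1} \subset D^n \subset \cdots \subset D^0 = T\mathcal{C}$, so that condition (i) of the definition of a special $k$-flag holds by construction, and then to verify in turn the rank condition (ii), the Cauchy-characteristic condition (iii), and the existence of the integrable corank-one subdistribution $F$ of condition (iv). Assertion (1) is already contained in Lemma \ref{distribinduite}: the $k+1$ generators produced there, say $Y_1,\dots,Y_{k+1}$, have pairwise distinct leading terms $\partial/\partial x_{n+1}^r$, hence are everywhere linearly independent, so $\D$ has rank $k+1$. Note also that $\dim\mathcal{C} = (n+2)k+1$, which is exactly the dimension $(s+1)k+1$ required for a flag of length $s = n+1$.

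For the flag itself I would first pass to the hyperspherical (angular) coordinates introduced in Section 4, writing each unit segment direction $x_{i+1}-x_i$ by $k$ angles $\theta_i^1,\dots,\theta_i^k$ together with the free position $x_{n+1}\in\R^{k+1}$; this identifies $\mathcal{C}$ with $\R^{k+1}\times(\mathbb{S}^k)^{n+1}$ and recasts $\D$ in a ``chained'' form generalising the controlled system \eqref{systcont}. In these coordinates I expect $\D$ to be spanned by the $k$ fields $\partial/\partial\theta_n^1,\dots,\partial/\partial\theta_n^k$ that rotate the guiding segment on its sphere, together with a single drive field $X$ whose flow propagates down the chain through the factors $A_j$. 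The crucial computation is that the bracket of each $\partial/\partial\theta_n^a$ with $X$ reproduces, modulo terms already present, the rotation field $\partial/\partial\theta_{n-1}^a$ of the next segment; thus $[\D,\D]=D^n$ gains exactly $k$ directions and has rank $2k+1$. Proving this bracket step by induction on the level shows that $D^j$ acquires the rotations of segments $n,n-1,\dots,j-1$ plus the drive, so that $D^j$ has rank $(n+2-j)k+1$, which is condition (ii).

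Conditions (iii) and (iv) then follow from the same chained picture. For (iii), a direct bracket check shows that the Cauchy-characteristic distribution $L(D^j)$ consists exactly of the rotation fields of the segments $n,n-1,\dots,j$: bracketing any of these with the drive stays inside $D^j$, whereas the newest rotation $\partial/\partial\theta_{j-1}^a$ and the drive field itself escape into $D^{j-1}$. This gives $\dim L(D^j)=(n+1-j)k$, so $L(D^j)$ has corank one in $D^{j+1}$, and in particular $L(\D)=L(D^{n+1})=0$. For (iv) I would take $F\subset D^1$ to be the span of all the angular fields $\partial/\partial\theta_i^a$ (for $i=0,\dots,n$ and $a=1,\dots,k$); since $D^1$ contains every such field together with exactly one drive direction, $F$ has corank one in $D^1$, and it is completely integrable because the coordinate vector fields $\partial/\partial\theta_i^a$ pairwise commute.

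The step I expect to be the main obstacle is the control of constant ranks (equiregularity): one must show that at each level the Lie square adds exactly $k$ independent directions and no more, uniformly over $\mathcal{C}$. This requires tracking how the coordinate-dependent factors $A_j=\langle\mathcal{Z}_j,\mathcal{N}_{j-1}\rangle$ and their derivatives enter the brackets, and verifying that they do not degenerate the new directions; these factors encode the relative position of consecutive segments and are exactly where the singular points of Remark \ref{sing} come into play. The passage to angular coordinates is precisely the device that organises this bookkeeping, reducing the articulated-arm brackets to a transparent chained normal form in which the successive rank jumps can be read off directly.
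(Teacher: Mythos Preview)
Your plan is essentially the paper's own proof: pass to hyperspherical coordinates on $\mathcal S\cong\R^{k+1}\times(\mathbb S^k)^{n+1}$, take the angular fields $X_m^i=\partial/\partial\theta_m^i$ together with a single drive field $X_m^0$, and show by a bracket computation that $[D^{m+1},D^{m+1}]=D^m$, with the Cauchy characteristic $L(D^m)$ and the integrable $F$ both given by the obvious spans of angular fields. One cosmetic point: the paper keeps $x_0$ (not $x_{n+1}$) as the free Cartesian coordinate, so the drive $X_n^0$ is the ``backward propagation'' field of Notations~\ref{nota2}; your version with $x_{n+1}$ would require reworking the generators of Lemma~\ref{distribinduite}.

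The one place where your outline and the actual mechanism diverge is the equiregularity step you flag as the main obstacle. The brackets $[X_m^i,X_m^0]$ do \emph{not} reproduce the individual fields $\partial/\partial\theta_{m-1}^i$; rather, from $X_m^0=A_mX_{m-1}^0+Z_m$ one gets
\[
[X_m^i,X_m^0]=\sum_{j=1}^{k+1}\frac{\partial\phi_m^{\,j}}{\partial\theta_m^i}\,Y_{m-1}^j,
\qquad
X_m^0=\sum_{j=1}^{k+1}\phi_m^{\,j}\,Y_{m-1}^j,
\]
where $\{Y_{m-1}^1,\dots,Y_{m-1}^{k+1}\}$ is a basis of $\D_{m-1}$. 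The matrix relating $\{X_m^0,[X_m^1,X_m^0],\dots,[X_m^k,X_m^0]\}$ to $\{Y_{m-1}^j\}$ is precisely the Jacobian $D\hat\Phi_m$ of the hyperspherical map, and it is its nonvanishing determinant $(-1)^{[(k+1)/2]}\prod_i(\sin\theta_m^{k-i})^i$ on the chart domain that forces the $k$ new directions to be independent at every point. In particular the factors $A_j$ play no role in the rank count: their vanishing is exactly what produces the singular points of Remark~\ref{sing} (and Remark~\ref{singarm}), but the flag $D^{n+1}\subset\cdots\subset D^0$ has constant ranks regardless. So your worry is aimed at the wrong coefficients; once you see the Jacobian appear, the induction is clean.
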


 The first part of Theorem \ref{drap} is a direct consequence of Lemma \ref{distribinduite}. Part (2) will be proved in section \ref{preuveTH1} in terms of hyperspherical coordinates.

\section{\bf The evolution of the articulated arm  in a system of  angular coordinates }\label{systangulaire}
 Given an articulated arm $(M_0,\cdots M_{n+1})$ in $ \mathbb{R}^{k+1}$, we will show that the constraint  controlled system (\ref{contsystgene}) can be written in the same way as (\ref{systcont}) in an adapted system of  angular coordinates  with $(k+1)$ controls, namely $v_n$ (the "normal" velocity of $M_{n+1}$) and the $k$ components of the  "tangential velocity" of $M_{n+1}$  (Theorem \ref{system}).
 \subsection{\bf Hyperspherical coordinates }\label{sphcoord}${}$\\
  The following  map
$$\G(x_0,x_1,\cdots,x_i,\cdots,x_{n+1})= (x_0,x_1-x_0,\cdots,x_i-x_{i-1},\cdots\,x_{n+1}-x_n)$$
  implies a global  diffeomorphism of   $( \mathbb{R}^{k+1})^{n+2}$ into itself and $\G({\cal C})=\R^{k+1}\times(\mathbb{S}^k)^{n+1}$ where  $\mathbb{S}^k$  is the canonical sphere  in $ \mathbb{R}^{k+1}$. In this representation, the canonical coordinates on  $ (\mathbb{R}^{k+1})^{n+2}=\G((\mathbb{R}^{k+1})^{n+2})$ will be  denoted by $(x_0,z_1,\cdots,z_i,\cdots,z_{n+1})$ so that  $\G$ is given by $x_0=x_0$ and $z_i=x_{i+1}-x_i$ for $i=0,\cdots n$.
Via this  global chart, each point $q=(x_0,x_1,\cdots,x_i,\cdots,x_{n+1})\in{\cal C}$ can  be identified with $(x_0,z_1,\cdots,z_i,\cdots,z_{n-1})\in \mathbb{R}^{k+1}\times(\mathbb{S}^k)^{n+1}$  for $i=0,\cdots, n$ and ${\cal C}$ can be identified with ${\cal S}= \mathbb{R}^{k+1}\times(\mathbb{S}^k)^{n+1}$. \\

We will put on $(\mathbb{S}^k)^{n+1}$ charts given by  {\it hyperspherical coordinates}.  We first recall some basic facts about this type of coordinates.\\

The {\it  hyperspherical coordinates } in $\R^{k+1}$  are given by the  relations:

 \begin{eqnarray}\label{cordonnees}
\begin{cases}z^1=\r\phi^1(\theta)= \r\sin {\theta^1} \cdots\sin{\theta^{k-1}}\sin{\theta^k} \cr
z^2=\r\phi^2(\theta)=\r\sin {\theta^1} \cdots\sin{\theta^{k-1}}\cos{\theta^k}\cr
z^3=\r\phi^3(\theta)=\r\sin {\theta^1} \cdots\sin{\theta^{k-2}}\cos{\theta^{k-1}} \cr
\qquad\cdots\cr
z^{k}=\r\phi^k(\theta)=\r\sin{ \theta^1} \cos{\theta^{2} }\cr
z^{k+1}=\r\phi^{k+1}(\theta)=\r\cos{\theta^1}\cr\end{cases}
\end{eqnarray}

with $\r^2=(z^1)^2+\cdots+(z^{k+1})^2$,  $0\leq \theta^k\leq 2\pi$
and $0\leq\theta^j\leq \pi$ for $1 \leq j \leq k-1$.

We consider  $\hat{\Phi}(\r,\theta)=\r\Phi(\theta)=z$, the application from  $]0,+\infty[\times[0,\pi]\times\cdots \times [0,\pi]\times[0,2\pi]$ to $\R^{k+1}$.

\begin{Rem}  The previous expression uses the "geographical" version of hyperspherical coordinates. An another version, maybe more usual, can be obtained by taking  $\displaystyle\frac{\pi}{2}-\theta^k$ instead of $\theta^k$ and then, permuting the functions sine and cosine in each formula. However, our choice is motivated by the following  fact:  the evolution of the articulated arm of length  $(n+1)$   written in a such chart, (see (\ref{systemcont}) ) gives exactly the system (\ref{systcont}) for $n=1$.
\end{Rem}

The jacobian matrix $D\hat{\Phi}$ of $\hat{\Phi}$ is:

\begin{eqnarray}\nonumber
\begin{pmatrix}
\sin {\theta^1} \cdots\sin{\theta^{k-1}}\sin{\theta^k}&\r\cos {\theta^1} \cdots\sin{\theta^{k-1}}\sin{\theta^k}&\cdots&\r\sin {\theta^1} \cdots\sin{\theta^{k-1}}\cos{\theta^k}\cr
\sin {\theta^1} \cdots\sin{\theta^{k-1}}\cos{\theta^k}&\r\cos {\theta^1} \cdots\sin{\theta^{k-1}}\cos{\theta^k}&\cdots&-\r\sin {\theta^1} \cdots\sin{\theta^{k-1}}\sin{\theta^k}\cr
\cdots&\cdots&\cdots&\cdots\cr
\cos{\theta^1}& -\r\sin{\theta^1}&0&0\cr
\end{pmatrix}
\end{eqnarray}

It is well known that  $\det(D\hat{\Phi})(\r,\theta)=(-1)^{[k+1/2]}(\r)^k\dis\prod_{i=1}^{k-1}(\sin{\theta^{k-i}})^i$. It follows that  $D\hat{\Phi}$ is  invertible only for  $0\leq \theta^k\leq 2\pi$ and $0< \theta^j< \pi$ for  $j=1,\cdots,k-1$.\\

In the sequence we note  $\{\nu,\Theta^1,\cdots,\Theta^k\}$  the moving frame  on  \\$\hat{\Phi}(]0,+\infty[\times[0,\pi]\times\cdots \times]0,\pi[\times]0,2\pi[)$ that is  the image, by  $D\hat{\Phi}$, of the canonical frame
 $\{\dis\frac{\partial}{\partial t},\dis\frac{\partial}{\partial \theta^1},\cdots,\frac{\partial}{\partial \theta^k}\}$.\\

Consider a  point $z=\Phi(\theta)=\hat{\Phi}(1,\theta)$. We note that, in this case, we have   $$D\hat{\Phi}=
\begin{pmatrix}\label{jacobian}
\phi&\frac{\partial \phi}{\partial \theta^1}&\cdots &\frac{\partial \phi}{\partial \theta^k}\cr
\end{pmatrix}$$
where $\phi$ (resp. $ \frac{\partial \phi}{\partial \theta^j}$) is
the column vector of components $\{\phi^1,\cdots,$ $\phi^{k+1}\}$\\ (resp. $\{\frac{\partial \phi^1}{\partial\theta^j},\cdots,\frac{\partial \phi^{k+1}}{\partial \theta^j}\}$).\\
Note that,  in canonical coordinates,  this moving frame  $\{\nu,\Theta^1,\cdots,\Theta^k\}$   can be written
 \begin{eqnarray}\label{cononmoving}
\n(z)=\dis\sum_{r=1}^{k+1}\phi^r\dis\frac{\partial}{\partial x^r}\;\;\;\; \textrm{ and }\;\;\;\; \Theta^j(z)=\dis\sum_{r=1}^{k+1}\frac{\partial{\phi}^r}{\partial \theta^j}\frac{\partial}{\partial x^r}
\end{eqnarray}
and in hyperpsherical coordinates this moving frame is:
 \begin{eqnarray}\label{hypermoving}
\n(\theta)=\dis\frac{\partial}{\partial  t}\;\;\;\; \textrm{ and }\;\;\;\; \Theta^j(\theta)=\dis\frac{\partial}{\partial \theta^j}
\end{eqnarray}

 On the other hand, it is well known that  $\det(D\hat{\Phi})(\r,\theta)=(-1)^{[k+1/2]}(\r)^k\dis\prod_{i=1}^{k-1}(\sin{\theta^{k-i}})^i$. It follows that  $D\hat{\Phi}$ is  invertible only for  $0\leq \theta^k\leq 2\pi$ and $0< \theta^j< \pi$ for  $j=1,\cdots,k-1$.\\
The  column vectors of the  jacobian matrix $D\hat{\Phi}$ are  pairwise orthogonal  and  we have

$||\phi ||^2=||\frac{\partial \phi}{\partial \theta^1}||^2=1$, $||\frac{\partial \phi}{\partial \theta^j}||^2=(\sin {\theta^1} \cdots\sin{\theta^{j-1}})^2$ for $j=2,\cdots,k$.\\
So, at each point $z=\Phi(\theta)$, the inverse of this jacobian  matrix is  the transpose of the matrix:
\begin{eqnarray}\label{inversejacobienne}
\dis\begin{pmatrix}
\phi&\dis\frac{1}{||\frac{\partial \phi}{\partial \theta^1}||}\frac{\partial \phi}{\partial\theta^1}&\cdots
&\dis\frac{1}{||\frac{\partial\phi}{\partial \theta^k}||}\frac{\partial \phi}{\partial \theta^k}\cr
\end{pmatrix}
\end{eqnarray}
This inverse  exists only if $0\leq \theta^k\leq 2\pi$ and $0< \theta^j< \pi$ for $j=1,\cdots,k-1$.\\

Consider a  hyperspherical chart   $y=\hat{\Phi}(\r,\theta)+a$ around some $a\in \R^{k+1}$
 and   note $\{\n,\Theta^1,\cdots, \Theta^k\}$ its associated moving frame.
 
 Let $y'=\hat{\Phi}'(\r',\theta')+a'$ be another  hyperspherical chart around some other point $a'\in \R^{k+1}$ such that its domain intersects the domain  of $\hat{\Phi}$.
  Note $\{\n',{\Theta'}^1,\cdots, {\Theta'}^k\}$ the moving frame  associated to it. So, at each point of the sphere  $\mathbb{S}_a$ of center $a$ of radius $1$ in $\R^{k+1}$ contained in  the intersection of these  domains, we can write :
\begin{eqnarray}\label{relentrenorm}
\nu'=A\nu+\dis\sum_{j=1}^k B^j \Theta^j
\end{eqnarray}

The components of these vectors are actually the components of the first column vector  of the matrix $[D\hat{\Phi}]^{-1}\circ D\hat{\Phi}'$.  According to  (\ref{inversejacobienne}), we get:
\begin{eqnarray}\label{chgtbase}
\begin{cases}
\bullet  A(\theta,\theta')=\dis\sum_{r=1}^{k+1}{\phi}^r{\phi'}^r  \\
\bullet   B^1(\theta,\theta')=\dis\sum_{r=1}^{k+1}\frac{\partial{\phi^r}}{\partial {\theta^1}}{\phi'}^r  \\
\bullet  B^j(\theta,\theta')=\dis\frac{1}{||\frac{\partial{\phi}}{\partial {\theta}^j}||}\dis\sum_{r=1}^{k+1}\frac{\partial{\phi}^r}{\partial {\theta}^j}{\phi'}^r
\end{cases}
\end{eqnarray}

 \begin{Rem}\label{projnu}${}$\\
 the vector $\dis\sum_{j=1}^k B^j \Theta^j$ is  nothing but the orthogonal projection of $\n'$ on to $T\mathbb{S}_a$.
\end{Rem}
\bigskip
\subsection{The evolution of the articulated arm on ${\cal S}$}\label{resultangsyst}${}$\\
 Coming back  to   ${\cal S}= \mathbb{R}^{k+1}\times(\mathbb{S}^k)^{n+1}$ which is considered as a subset in $(\R^{k+1})^{n+2}$,  let  $\mathbb{S}_{i}$, $i=0,\cdots, n$, be the canonical sphere in $\R^{k+1}_{i+1}$. Recall that the canonical coordinates on $\R^{k+1}_{i}$ are denoted by $z_i=(z_i^1,\cdots, z_i^r,\cdots, z_i^{k+1})$.  Given   a point $\a$ in the sphere $\mathbb{S}_{i}$,  there exists a  hyperspherical chart $z_{i+1}=\hat{\Phi}_{i}(\r_{i},\theta_{i})=\r_{i}\Phi_i(\theta_i^1,\cdots\theta_i^k)$ defined for
 $0\leq \theta_i^k\leq 2\pi$ and $0<\theta_i^j<\pi$, $j=1,\cdots,k-1$,
 where $\Phi_i(0,\cdots,0)=\a$. So, for a given point $q=(x_0,z_1,\cdots,z_i,\cdots,z_{n+1})\in {\cal S}$, we get a chart $(Id-x_0,\hat{\Phi}_0,\cdots,\hat{\Phi}_1,\cdots,\hat{\Phi}_{n})$
 centered at $q$,  such that its restriction to $\r_i=1$, $i=0,\cdots,n$,  induces a chart of
 $\cal S$ (centered at $q$).
For $ i=0,\cdots,n$, in a neighborhood of each  $z_{i+1}\in \R^{k+1}_{i+1}$, we consider the moving frame\\
${\cal R}_i=\{\nu_i,\Theta_i^1,\cdots,\Theta_i^k\}$ . \\

\begin{Rem}\label{sphere plongee} ${}$
\begin{enumerate}
\item Given $q=(x_0,\cdots,x_{n+1})\in {\cal C}$, for  $i=0 ,\cdots,n$,  denote by $\tilde{\mathbb{S}}_i$ the sphere in $\R^{k+1}$ of center $x_{i}$ and radius $1$. One can put on $\R^{k+1}$ the hyperspherical coordinates $y_i=\hat{\Phi}_{i}(\r_{i},\theta_{i})+x_{i}$. As $x_{i+1}$ belongs to $\tilde{\mathbb{S}}_i$, on a neighborhood of $x_{i+1},$ we have also the following moving frame (again denoted ${\cal R}_i$):
$${\cal R}_i=\{\nu_i, \Theta_i^1,\cdots,\Theta_i^k\}.$$
Note that on $x_{i+1}$,  the outward normal unit vector of $\tilde{\mathbb{S}}_i$ is  $\n_i(x_{i+1})$,  and\\ $\{ \Theta_i^1( x_{i+1}),\cdots, \Theta_i^k( x_{i+1})\}$ is a basis $T_{x_{i+1}}\tilde{\mathbb{S}}_i$.\\
\item  From part (1), if $\zeta=\G(q)$,  we get an isomorphism from  $T_{\zeta}{\cal S}$ to \\$T_{x_0}\R^{m+1}\oplus T_{x_1} \tilde{\mathbb{S}}_0\oplus \cdots\oplus T_{x_{n+1}}\tilde{\mathbb{S}}_{n}$.\\ So we can identify $T_\zeta{\cal S}$ with  $T_{x_0}\R^{n+1}\oplus T_{x_1} \tilde{\mathbb{S}}_0\oplus \cdots\oplus T_{x_{n+1}}\tilde{\mathbb{S}}_{n}$.
\end{enumerate}
\end{Rem}
\bigskip
\begin{Not}\label{nota2} ${}$\\ in hyperspherical coordinates, we define on ${\cal S}$:
\begin{enumerate}
\item $  A_i=\dis\sum_{r=1}^{k+1}\phi_{i-1}^r\phi_{i}^r\qquad$
  for $i=1,\cdots,n$  and $A_{n+1}=1$;\\
\item $Z_0=\dis\sum_{r=1}^{k+1}\phi_0^r\frac{\partial}{\partial x^r}$\\
\item $Z_i=\dis\sum_{j=1}^k B_i^j\frac{\partial}{\partial{\theta_{i-1}^j}}$ for $i=1,\cdots,n$\\
 with:\\
$\bullet$ $ B^1_i=\dis\sum_{r=1}^{k+1}\frac{\partial{\phi_{i-1}^r}}{\partial {\theta_{i-1}^1}}\phi_i^r\qquad$
for $i=1,\cdots,n$\\
$\bullet$   $B_i^j={\frac{1}{||\frac{\partial\phi_{i-1}}{\partial \theta_{i-1}^j}||}}\dis\sum_{r=1}^{k+1}\dis\frac{\partial{\phi_{i-1}^r}}{\partial {\theta_{i-1}^j}}\phi_i^r\qquad$ for $i=1,\cdots,n$ and  $j=2,\cdots,k$\\

\item $X^i_m=\dis\frac{\partial}{\partial \theta^i_m}$,  for $i=1,\cdots,k$,  $m=0,\cdots, n$\\

\item$X_m^0=\dis\sum_{i=0}^m f^i_mZ_i$, for $m=0,\cdots,n$

with  $f^r_m=\dis\prod_{j=r+1}^{m}A_j$,  for
$r=0,\cdots,m-1$,  and $f_m^m=1$   for $m=0,\cdots,n$.\\

\item $\D_n$  the distribution  generated by $\{X^0_n,X^1_n,\cdots,X^k_n\}$ (with previous notations).
\end{enumerate}
\end{Not}

\begin{Rem}\label{formeconcise}${}$
\begin{enumerate}
\item For $i=0,\cdots,n$, consider $[\Phi_i]$ the column matrix of components $(\phi_i^1,\cdots,$ $\phi_i^{k+1})$ and  $[D\Phi_i]^{-1}$ the matrix composed by the
  last $k$  rows of the jacobian matrix of the application  $(\Phi_i)^{-1}$.  Finally, denote by  $[\dis\frac{\partial}{\partial\theta_i}]$ $($resp. $[\dis\frac{\partial}{\partial x}])$ the column matrix  of components $(\frac{\partial}{\partial{\theta_i^1}},\cdots,\frac{\partial}{\partial{\theta_i^k}})$, $($resp.$(\frac{\partial}{\partial{x^1}},\cdots,\frac{\partial}{\partial{x^{k+1}}}))$
 for $i=0,\cdots,n$. So we can write:\\
 ${}\qquad \qquad \qquad Z_0=^t[\dis\frac{\partial}{\partial x}]\;[\Phi_0]$   and $Z_i=^t[\dis\frac{\partial}{\partial\theta_{i-1}}]{[[D\Phi_{i-1}]]^{-1}}[\Phi_i]$,  for $i=1,\cdots,n.$\\
\item According  to Remark  \ref{sphere plongee} part (2), in the identification :\\
${}\;\;\;\;\;\;\;\;\;\;\;T_\zeta{\cal S}\equiv T_{x_0}\R^{n+1}\oplus T_{x_1} \tilde{\mathbb{S}}_0\oplus \cdots\oplus T_{x_{n+1}}\tilde{\mathbb{S}}_{n},$\\ 
the component of $X_n^0$ on $T_{x_i}\tilde{\mathbb{S}}_{i-1}$ is exactly $f_n^iZ_i(\zeta)$.
\end{enumerate}
\end{Rem}
With the previous  notations we have the following result

\begin{The}\label{system}${}$
\begin{enumerate}
\item On $\cal S$,  the distribution $\D_n$ is the image by $\G$ of the distribution $\D$  \\
where $\G:{\cal C}\ap {\cal S}$ is the diffeomorphism defined at the beginning of Subsection \ref{sphcoord}.

\item The evolution of the articulated arm of length  $(n+1)$  is described in a chart, by the following   controlled system with $k+1$ controls :
\begin{eqnarray}\label{systemcont}
\begin{cases}\dot{x}^1= v_0\phi_0^1\cr
\dot{x}^2=v_0\phi_0^2\cr
\qquad\cdots\cr
\dot{x}^{k+1}=v_0\phi_0^{k+1}\cr
\dot{\theta}_{0}^1=v_1B_1^1\cr
\qquad\cdots\cr
\dot{\theta}_{0}^k=v_1B_1^k\cr
\qquad\cdots\cr
\dot{\theta}_{i}^1=v_{i+1}B_{i+1}^1\cr
\qquad\cdots\cr
\dot{\theta}_{i}^k=v_{i+1}B_{i+1}^k\cr
\qquad\cdots\cr
\dot{\theta}_{n-1}^1=v_nB_{n}^1\cr
\qquad\cdots\cr
\dot{\theta}_{n-1}^k=v_nB_{n}^k\cr
\dot{\theta}_n^1=v_{\theta_n^1} \cr
\qquad\cdots\cr
\dot{\theta}_n^k=v_{\theta_n^k}\cr\end{cases}
\end{eqnarray}
\noindent where $v_i=v_n\dis\prod_{r=i+1}^{n}A_r$ and  $(v_{\theta_n^1},\cdots,v_{\theta_n^k}, v_n)$ are the $(k+1)$ controls of the system (\ref{systemcont}).\\

Moreover, according to Remark \ref{sphere plongee}   we have:

$\bullet$ $(v_{\theta_n^1},\cdots,v_{\theta_n^k})$ are the "tangential components"  of the velocity  of $M_{n+1}$,  namely  the components, in the canonical basis of   $T_{x_n+1}\tilde{\mathbb{S}}_{n}$, of the orthogonal projection of the velocity of $M_{n+1}$;

$\bullet$   $v_{i-1}$ is  the "normal  velocity" of  $M_i$ for all  $i =1,\cdots,n+1$, namely the components of the orthogonal projection of the velocity of $M_i$ on the direction generated by $\n_{i-1}(x_i)$.
\end{enumerate}
\end{The}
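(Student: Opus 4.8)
The entire statement is obtained by transporting the generators of $\D$ supplied by Lemma \ref{distribinduite} through $\G$ and reading them in the hyperspherical charts, so I would begin by recording the two computations that make this transport explicit. First, since $\G$ has inverse $x_i=x_0+\sum_{l=1}^{i}z_l$ with $z_l=x_l-x_{l-1}$, one gets $\G_*\frac{\partial}{\partial x_i^s}=\frac{\partial}{\partial z_i^s}-\frac{\partial}{\partial z_{i+1}^s}$ for $1\le i\le n$, together with $\G_*\frac{\partial}{\partial x_0^s}=\frac{\partial}{\partial x_0^s}-\frac{\partial}{\partial z_1^s}$ and $\G_*\frac{\partial}{\partial x_{n+1}^s}=\frac{\partial}{\partial z_{n+1}^s}$. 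Second, evaluating the scalar product of Lemma \ref{distribinduite} in the $z$-coordinates gives $A_j=\langle{\cal Z}_j,{\cal N}_{j-1}\rangle=\sum_r z_{j+1}^r z_j^r=\sum_r\phi_j^r\phi_{j-1}^r$, which is exactly the function $A_j$ of Notation \ref{nota2}. This coincidence of the Cartesian and the hyperspherical $A_j$ is the bridge that links Lemma \ref{distribinduite} to the objects of Notation \ref{nota2}.

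For part (1) I would push forward each generator $Y_r=\phi_n^r\big(\sum_{i=0}^{n}f_n^i{\cal Z}_i\big)+\frac{\partial}{\partial x_{n+1}^r}$ of $\D$ (here $\prod_{j=i+1}^{n+1}A_j=f_n^i$ because $A_{n+1}=1$), and split every $\frac{\partial}{\partial z_l^s}$ into its radial part along $\nu_{l-1}$ and its part tangent to the sphere $\mathbb{S}_{l-1}$ via the inverse Jacobian (\ref{inversejacobienne}); the collected tangential parts reproduce the term $f_n^iZ_i$ of $X_n^0$ read in the frame ${\cal R}_{i-1}$ (Remark \ref{formeconcise}). The decisive point is the bookkeeping of the radial terms: $\G_*{\cal Z}_i$ contributes $A_i\nu_{i-1}$ from $\frac{\partial}{\partial z_i}$ and $-\nu_i$ from $\frac{\partial}{\partial z_{i+1}}$, so in $\G_*\big(\sum_i f_n^i{\cal Z}_i\big)$ the coefficient of each $\nu_i$ with $i<n$ is $-f_n^i+f_n^{i+1}A_{i+1}=0$ by the recursion $f_n^{i+1}A_{i+1}=f_n^i$, while the single surviving $-\nu_n$ is annihilated by the radial part $\phi_n^r\nu_n$ of $\frac{\partial}{\partial z_{n+1}^r}$. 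What remains is $\G_*Y_r=\phi_n^r\,X_n^0+\sum_{j=1}^{k}c_{rj}\,X_n^j$ with $c_{rj}$ proportional to $\frac{\partial\phi_n^r}{\partial\theta_n^j}$. Since the matrix with columns $\phi_n,\frac{\partial\phi_n}{\partial\theta_n^1},\dots,\frac{\partial\phi_n}{\partial\theta_n^k}$ is, up to a rescaling of columns, the Jacobian $D\hat\Phi_n$ and hence invertible off the poles, the families $\{\G_*Y_r\}_r$ and $\{X_n^0,\dots,X_n^k\}$ generate the same rank-$(k+1)$ distribution, that is $\G_*\D=\D_n$.

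For part (2), the system (\ref{contsystgene}) is by construction the generic element $\dot q=\sum_{i=0}^{n}u_i{\cal Z}_i+\sum_r u_{n+r}\frac{\partial}{\partial x_{n+1}^r}$ of ${\cal E}$ constrained to ${\cal C}$, so its admissible trajectories are the integral curves of $\D$. Transporting by $\G$ and invoking part (1), each trajectory is an integral curve of $\D_n$, i.e. $\dot\zeta=v_n X_n^0+\sum_{j=1}^{k}v_{\theta_n^j}X_n^j$; inserting $X_n^0=\sum_i f_n^i Z_i$ and $X_n^j=\frac{\partial}{\partial\theta_n^j}$ from Notation \ref{nota2} and reading off components yields exactly (\ref{systemcont}), the intermediate velocities satisfying $v_i=v_n\prod_{r=i+1}^{n}A_r$. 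This relation is the precise analogue of the car identity following (\ref{relat0n-1}) and is once more the recursion $f_n^i=A_{i+1}f_n^{i+1}$ used above. The kinematic meaning of the controls then follows from Remarks \ref{sphere plongee} and \ref{projnu}: writing $\dot x_{n+1}=\dot x_0+\sum_{l=1}^{n+1}\dot z_l$ and projecting, the $v_{\theta_n^j}=\dot\theta_n^j$ are the components, in the basis $\{\Theta_n^j\}$ of $T_{x_{n+1}}\tilde{\mathbb{S}}_n$, of the tangential part of the velocity of $M_{n+1}$, while $v_{i-1}$ is the component of the velocity of $M_i$ along $\nu_{i-1}(x_i)$.

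The only genuinely delicate step is the radial bookkeeping in part (1): one must carry out the radial/tangential splitting simultaneously along the whole chain of spheres and check that, once weighted by the $f_n^i$, all radial contributions telescope to zero, leaving a field tangent to ${\cal S}$. Everything else — matching the tangential parts with the $Z_i$, the invertibility of $D\hat\Phi_n$ (valid precisely on the regular hyperspherical charts, which is why the conclusion is stated "in a chart"), and reading off the controlled system — is then routine.
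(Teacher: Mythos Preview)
Your argument is correct, but it proceeds differently from the paper. The paper establishes part~(1) by a \emph{curve-based} argument: it takes a curve $\gamma(t)=(x_0(t),\dots,x_{n+1}(t))$ in ${\cal C}$ tangent to $\D$, writes the kinematic constraint $\dot x_{i-1}=v_{i-1}z_i$, and from the orthogonal decomposition $z_{i+1}=\langle z_{i+1},z_i\rangle z_i+\tilde z_i$ derives the recursion $v_{i-1}=v_i\langle z_{i+1},z_i\rangle$ together with $\dot z_i=v_i\tilde z_i$; reading these in the moving frames ${\cal R}_{i-1}$ shows that $D\G(\dot\gamma)$ is tangent to $\D_n$, and the converse is checked similarly. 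You instead push forward the \emph{generators} of $\D$ from Lemma~\ref{distribinduite} directly: you compute $\G_*{\cal Z}_i=A_i\nu_{i-1}+Z_i-\nu_i$ (and $\G_*{\cal Z}_0=Z_0-\nu_0$), and then observe that the radial pieces in $\sum_i f_n^i\,\G_*{\cal Z}_i$ telescope via $f_n^{i+1}A_{i+1}=f_n^i$, leaving only $-\nu_n$, which is cancelled by the radial part of $\G_*\frac{\partial}{\partial x_{n+1}^r}$. This telescoping is the exact counterpart of the paper's recursion $v_{i-1}=A_iv_i$, seen at the level of vector fields rather than along a curve. Your route is more algebraic and yields part~(1) in one stroke, after which part~(2) is immediate; the paper's route is closer to the mechanics and makes the interpretation of $v_{i-1}$ as the normal velocity of $M_i$ emerge directly from the computation rather than as a separate observation via Remarks~\ref{sphere plongee} and~\ref{projnu}.
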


\begin{Rem}
Equations of system $(\ref{systemcont})$, for $k=1$, are  exactly $($with the same  notations$)$ the classical modeling of the car with $n$ trailers $($
 \cite{FLIE}, \cite{Jea}, \cite{Lau}, 
 \cite{PasResp3}, \cite{Sor1}$)$ 
 \end{Rem}
\begin{Rem}\label{singarm}
 According to Remark \ref{sing} , for $k\geq 2$, a point $q=(x_0,z_1,\cdots,z_{n+1})$ is singular if and only if there  exists an  index $0\leq i\leq n$ such that $A_i(q)=0$ which is equivalent to $[M_{i-1},M_i]$  and $[M_i,M_{i+1}]$ are orthogonal in $M_i$. In this situation,  the velocity all  $M_j$  points are zero, for $j< i$. The set of such points is studied in \cite{Sla}.
\end{Rem}

\section{Proof of Theorem \ref{system}}

For part (1), it is sufficient to prove that any germ  curve $\g$ in ${\cal C}$ is tangent to ${\D}$ at a point $q\in{\cal C}$, if and only if,  $\G \circ \g$ is  tangent to the distribution generated by $\{X^0_n,X^1_n,\cdots,X^k_n\}$  at  $ {\G}(q)$.\\

Let be $\g(t)=(x_0(t),x_1(t),\cdots,x_{n+1}(t))$ a curve in ${\cal C}$  defined on $]-\varepsilon,\varepsilon[$ with $\g(0)=q$. We have
\begin{eqnarray}\label{decompositionxi}
x_i=x_{i-1}+z_i\textrm{ for } i=1\cdots n+1
\end{eqnarray}
Assume that $\g$ is tangent to ${\D}$. It follows that we have
 for each $t$  :
\begin{eqnarray}\label{contvitesse}
\dot{x}_{i-1}=v_{i-1}z_{i}, \textrm{ for } i=1,\cdots,n+1,
\end{eqnarray}
In view of (\ref{contvitesse}), by differentiation of (\ref{decompositionxi}) we get 
\begin{eqnarray}\label{decompovitesse}
\dot{x}_i=v_{i-1}z_i+ \dot{z}_i \textrm{ for }  i=1,\cdots, n+1
\end{eqnarray}
for some $v_{i}\in \R$ (which depends of $t$).

On the other hand, in $\R^{k+1}$ we  also have the orthogonal decomposition
\begin{eqnarray}\label{decompositonaiai+1}
z_{i+1}=<z_{i+1},z_{i}>z_i+\tilde{z}_{i}\;\;\;\;\;\;\;\; i=0,\cdots, n.
\end{eqnarray}
 where 
 \begin{eqnarray}\label{orthogonal}
\tilde{z}_{i} \textrm{ is the   othogonal  projection of  } z_{i+1} \textrm{  on to the hyperplan orthogonal to } z_{i}.
\end{eqnarray}

So we get:
\begin{eqnarray}\label{decompoxi}
\dot{x}_i= v_{i} z_{i+1}= v_{i}<z_{i+1},z_i>z_i+v_{i} \tilde{z}_i\textrm{ for } i=1,\cdots,n
\end{eqnarray}
Comparing (\ref{decompovitesse}) and (\ref{decompoxi}), we get :
\begin{eqnarray}\label{relat v-i}
v_{i-1}=v_{i}<z_{i+1},z_i>\;\textrm{ and } \;\;\dot{z}_i=v_{i} \tilde{z}_{i}  \textrm{ for } i=1,\cdots n
\end{eqnarray}

\noindent $\bullet$  for $i=n+1$,  we have the orthogonal decomposition
\begin{eqnarray}\label{decompoxh}
\dot{x}_{n+1}=v_{n+1}z_{n+1}+ \tilde{z}_{n+1}
\end{eqnarray}

But  we also have
$$\dot{x}_{n+1}= \dot{z}_{n+1}+\dot{x}_{n}=v_{n}z_{n+1}+\dot{z}_{n+1}$$

So from the unicity of the orthogonal decomposition,  we get 
\begin{eqnarray}\label{decompvh}
v_n=v_{n+1} \textrm{ and } \dot{z}_{n+1}=\tilde{z}_{n+1}
\end{eqnarray}
\noindent $\bullet$ for $i=0$  we have of course 
\begin{eqnarray}\label{decomposx0}
\dot{x}_0=v_0 z_1
\end{eqnarray}

From (\ref{relat v-i}) and (\ref{decompvh}) we get the relation
\begin{eqnarray}\label{calcul vi}
v_i=\dis\prod_{j=i+2}^{n}<z_{j+1},z_j>v_n \textrm{ for } i=0\cdots, n-1
\end{eqnarray}

On the other hand,
according to Remark \ref{sphere plongee} part (2), as  $x_{i}\in \tilde{\mathbb{S}}_{i-1}$,  for $t=0$,  $\dot{x}_i$  can be  considered as a vector in $ T_{x_i}\R^{k+1}$ and so, in the moving frame ${\cal R}_i$, the relations (\ref{decompoxi}), (\ref{calcul vi}) and  (\ref{decompoxh})   give rise to the following decompositions
\begin{eqnarray}\label{decompovitess2}
\dot{x}_i=v_{i-1}\n_{i-1}(x_{i})+v_{i}{Z}_{i}  \textrm{ for } i=1,\cdots n, \textrm{ and } \dot{x}_{n+1}=v_n\n_n(x_{n+1})+\tilde{Z}_{n+1}
\end{eqnarray}

where ${Z}_{i}$ (resp. $\tilde{Z}_{n+1}$) belongs to  $ T_{x_i}\tilde{\mathbb{S}}_{i-1}$ pour $i=1,\cdots, n$ (resp. $ T_{x_{n+1}}\tilde{\mathbb{S}}_{n}$).\\

 In fact,  according to (\ref{orthogonal}) and   Remark \ref{sphere plongee} part (2), ${Z}_{i}$ is the orthogonal projection\\ of $(D\hat{\Phi}^{-1}_{i-1})^{-1}\circ D\hat{\Phi}_{i}(\n_{i}(x_{i+1}))$ on $ T_{x_i}\tilde{\mathbb{S}}_{i-1}$, for $i=1,\cdots, n$ . Taking in account  Remark \ref{projnu} and  (\ref{chgtbase}),   we get the expression  of ${Z}_{i}$ given in Notations \ref{nota2} for $i=1,\cdots n$ . Moreover,  the  $i^e$ component $\dot{z_i}$  of  $D\G(\dot{\g}(0))$ is $v_{i-1} Z_i$ in hyperspherical coordinates for $i=1,\cdots n$ . On the other hand, in the hyperspherical  $\Phi_n$, the vector  $\tilde{Z}_{n+1}$ can be written
 $$\tilde{Z}_{n+1}=\dis\sum_{j=1}^k w_j\dis\frac{\partial}{\partial \theta_n^j}$$ 
Finally, from (\ref{calcul vi}) and the value of $A_j$, we get:
$$v_i=f^i_n v_n \textrm{ for } i=0,\cdots n$$

On the other hand, from (\ref{decomposx0}),  as vector in $T_{x_0}\R^{m+1}$,  we have the following decomposition:
$$\dot{x}_0=v_0\dis\sum_{r=1}^m\phi_0^r\dis\frac{\partial}{\partial x^r_0}$$

So, according to Remark \ref{sphere plongee} part (2) and Remark \ref{formeconcise} part (2), in hyperspherical coordinates,  we finally  obtain  that, $D\G(\dot{\g}(0))$ is tangent $\D_n$ at $\zeta=\G(q)$.\\

Conversely, let be $\d(t)=(x_0(t),z_1(t),\cdots,z_{n+1}(t))$ a curve  in  ${\cal S}$, defined on $]-\varepsilon,\varepsilon[$, and  such that $\d(0)=\zeta=\G(q)$. Assume that, in hyperpsherical coordinates,  $\d$ is tangent  to $\D_n$. and so  $\d$ satisfies the differential equation (\ref{systemcont}). According to the definitions of $Z_i$  in   hyperpsherical coordinates and  taking in account (\ref{orthogonal}) and   Remark \ref{sphere plongee} part (2),  there exists   a curve $v_n(t)$ in $\R$ such that,
for any $i=1,\cdots n$, we have 

$\dot{z}_{i}=[(\dis\prod_{j=i+1}^{n}<z_{j+1},z_j>)v_n ][z_{i+1}-<z_{i+1},z_{i}>z_{i}] \textrm{ for }  i=1,\cdots n$

 and 

\begin{eqnarray}\label{x0}
\dot{x}_0=[(\dis\prod_{j=1}^{n}<z_{j+1},z_j>)v_h] z_1 .
\end{eqnarray}

It follows that for $ i=1,\cdots n$ we have:
\begin{eqnarray}\label{xi}
\dot{x}_i=\dot{x}_0+\dis\sum_{j=1}^i \dot{z}_j=(\dis\prod_{l=i+1}^{n}<z_{j+1},z_j>v_n) z_{i+1}
\end{eqnarray}

so, according to  (\ref{x0}) and (\ref{xi}), we obtain that  $ \g(t)=\G^{-1}\circ \d(t)$ is a curve in ${\cal C}$ which is tangent to $\D$.\\
\bigskip

 Taking in account part (1) of Therorem  \ref{system}, the kinematic  evolution  of the articulated arm is  a controlled system on $\cal S$ which is exactly (\ref{systemcont}). However, for the completeness of the proof of this result, we must prove the interpretation of the control in terms of the components of the velocity of $M_{i}$, $i=1,\cdots, n+1$.\\
 
\noindent  From (\ref{decompovitess2}), according to  Remark \ref{sphere plongee} part (2), in hyperpsherical coordinates $\Phi_n$ around   $x_{n+1}\in \R^{k+1}$,  the velocity $\dot{x}_{n+1}$ of $M_{n+1}$  can be written:
$$\dot{x}_{n+1}=v_n\n_n(x_{n+1})+\dis\sum_{j=1}^kv_{\theta_n^j}\dis\frac{\partial}{\partial \theta_n^j}$$
Where $v_n$ is the "normal velocity" of $M_{n+1}$ and $(v_{\theta_n^1},\cdots,v_{\theta_n^k})$  are the ”tangential components” of the velocity of $M_{n+1}$. Moreover, according to the value of $X_n^0$, the contol parameters are exactly $(v_n,v_{\theta_n^1},\cdots,v_{\theta_n^k})$.\\
 In the same way, for any $i=1,\cdots,n$, in hyperspherical coordinates $\Phi_{i-1}$  around $x_i\in \R^{k+1}$ the velocity $\dot{x}_i$ of $M_i$ has the decomposition
 $$\dot{x}_i=v_{i-1}\n_{i-1}(x_{i})+v_{i}{Z}_{i}$$
 where $Z_i$ belongs to $T_{x_i}\tilde{\mathbb{S}}_{i-1}$. So, $v_{i-1}$ the "normal velocity " of $M_i$

\section{ Proof  of Theorem \ref{drap}}\label{preuveTH1}${}$

We will see that the  distribution $\D_{n}$ generates actually a
special  $k$-flag of length $(n+1)$ on a  $k(n+2)+1$ dimensional manifold. Let's introduce the  following notations:

\begin{itemize}
\item $\D_{m}$ is the distribution generated by $\{X_m^0,X_m^1,\cdots X_m^k\}$ for $m=1,\cdots,n;$
\item $D^{m+1}$  is the distribution generated by  $X^0_{m}$ and
$\{X^1_j,\cdots,X^k_j \;\; m\leq j\leq n\}$\\   for $m=0,\cdots,n$;
\item $D^{0}=TM$
\item $E^{m+1}$  is the distribution generated by $\{X^1_j,\cdots,X^k_j, \;\; m\leq j\leq n\}$\\ for  $m=0,\cdots,n$.\\
\end{itemize}
\begin{Pro}\label{speflag}${}$
$\Delta_{n}$ is a special k-flag distribution.
 More precisely, it satisfies the following properties:
\begin{enumerate}
\item  For $m=1,\cdots,n+1$, the distributions $D^m$ and $E^m$,  are of respective constant dimensions  $(n-m+2)k+1$ and $(n-m+2)k$;

 \item for $m=1,\cdots,n+1$, $E^{m}$ is an involutive subdistribution of $D^{m}$ of codimension $1$. Moreover $[E^{m+1},D^{m+1}] \subset D^{m}$ for $m=1,\cdots,n$. Actually  $E^{m+1}$  is the "Cauchy-characteristic distribution" of  $D^{m}$ for $m=1,\cdots,n$ $($\cite{Mor5}, \cite{Mor6}$)$;

\item $[D^{m+1},D^{m+1}]=D^{m}$ for all $m=0,\cdots,n$;

\item ${}$\\
$\begin{matrix}
\Delta_{n}=D^{n+1}&\subset\cdots\subset &D^{m}&\subset\cdots\subset&D^{1}        &\subset & D^{0}= TM\hfill\cr
                             \hfill\cup&\cdots                            &\cup    &\cdots                         &\cup\hfill   &           &\cr
                             \hfill E^{n+1}&\subset\cdots\subset  & E^{m}&\subset\cdots\subset& E^{1}\hfill&            &\cr
\end{matrix}$
\end{enumerate}
\end{Pro}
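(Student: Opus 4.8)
The plan is to verify the four assertions directly in the hyperspherical chart, exploiting the block structure of the generators with respect to the coordinate groups $x,\theta_0,\dots,\theta_n$, together with one bracket identity whose non-degeneracy will reduce to the invertibility of the hyperspherical Jacobian $D\hat\Phi_m$. Everything rests on the recursion $X^0_m=Z_m+A_mX^0_{m-1}$, which follows at once from $f^i_m=A_mf^i_{m-1}$ and $f^m_m=1$, and on the fact that $Z_i$ lives in the $\theta_{i-1}$-block while $Z_0=\phi_0$ lives in the $x$-block.

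\emph{Part (1).} The $E^m$ are spanned by the pairwise distinct coordinate fields $\partial/\partial\theta_j^i$ with $m-1\le j\le n$, so they have constant rank $(n-m+2)k$ and are automatically involutive. Since $D^m=E^m+\R X^0_{m-1}$, I only need that $X^0_{m-1}\ne0$ and that it is independent of $E^m$. The recursion shows $X^0_{m-1}$ has components only in the blocks $x,\theta_0,\dots,\theta_{m-2}$, disjoint from the blocks $\theta_{m-1},\dots,\theta_n$ spanning $E^m$; and $X^0_{m-1}$ never vanishes, for if $i_0$ is the largest index with $Z_{i_0}\ne0$ (with $i_0=0$, $Z_0=\phi_0\ne0$, when all higher $Z_i$ vanish), then the intervening factors $A_{i_0+1},\dots,A_{m-1}$ all equal $\pm1$ and the top block $f^{i_0}_{m-1}Z_{i_0}\ne0$ survives. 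This gives the stated ranks; the rank $k+1$ of $\D_n=D^{n+1}$ also follows from Lemma \ref{distribinduite} via Theorem \ref{system}(1).

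\emph{Part (2).} Involutivity of $E^m$ and codimension one in $D^m$ are contained in Part (1). For the Cauchy characteristic, the coefficients of $X^0_{m-1}$ depend only on $\theta_0,\dots,\theta_{m-1}$, i.e. on indices $<m$; hence $[\partial/\partial\theta_j^i,X^0_{m-1}]=0$ for $j\ge m$, and as $E^{m+1}$ commutes with $E^m$ I get $[E^{m+1},D^m]=0$. In particular $[E^{m+1},D^{m+1}]\subset D^m$ and $E^{m+1}\subset L(D^m)$. Conversely $L(D^m)$ is involutive and, by the sandwich property for special flags (\cite{Mor5},\cite{Mor6}), contained in $D^{m+1}$; since $\mathrm{codim}(E^{m+1}\subset D^{m+1})=1$, either $L(D^m)=E^{m+1}$ or $L(D^m)=D^{m+1}$, and the latter is excluded by Part (3) because $[D^{m+1},D^{m+1}]=D^m\supsetneq D^{m+1}$. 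Hence $E^{m+1}=L(D^m)$.

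\emph{Part (3), the main step.} Brackets among the coordinate generators vanish, and $[\partial/\partial\theta_j^i,X^0_m]=0$ for $j>m$, so $[D^{m+1},D^{m+1}]$ is generated by $D^{m+1}$ together with the $k$ fields $[X^i_m,X^0_m]=\frac{\partial A_m}{\partial\theta_m^i}X^0_{m-1}+\frac{\partial Z_m}{\partial\theta_m^i}$, $i=1,\dots,k$, obtained from the recursion and $\partial X^0_{m-1}/\partial\theta_m^i=0$. Each lies in $D^m$, giving $[D^{m+1},D^{m+1}]\subset D^m$. For the reverse inclusion I pass to the $(k+1)$-dimensional quotient $D^m/E^{m+1}$, whose basis is the classes of $X^0_{m-1}$ and of $\partial/\partial\theta_{m-1}^p$, $p=1,\dots,k$: there the class of $X^0_m$ has coordinates $(A_m,B_m^1,\dots,B_m^k)$ and the class of $[X^i_m,X^0_m]$ has coordinates $(\partial A_m/\partial\theta_m^i,\partial B_m^1/\partial\theta_m^i,\dots,\partial B_m^k/\partial\theta_m^i)$. \textbf{The crux is to show these $k+1$ vectors form a basis of $D^m/E^{m+1}$.} I will do this by recognizing, from (\ref{chgtbase}) and (\ref{inversejacobienne}), that $(A_m,B_m^1,\dots,B_m^k)=\Psi\,\phi_m$, where $\Psi\in O(k+1)$ is the orthonormal frame $\{\phi_{m-1},\partial\phi_{m-1}/\partial\theta^1_{m-1},\dots\}$ and is independent of $\theta_m$; differentiating in $\theta_m$ produces the rows $\Psi\,\partial\phi_m/\partial\theta_m^i$, so the $(k+1)\times(k+1)$ coordinate matrix equals $\Psi\cdot D\hat\Phi_m|_{\rho=1}$ up to transposition, and its determinant is $\pm\det D\hat\Phi_m$, nonzero throughout the chart $0<\theta^j_m<\pi$ ($j<k$). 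Thus $[D^{m+1},D^{m+1}]=D^m$ for $m=0,\dots,n$; for $m=0$ the same computation reduces directly to $\det D\hat\Phi_0\ne0$ and recovers the whole $x$-block, giving $[D^1,D^1]=TM$. Finally, Part (4) merely assembles the above: $D^{m+1}\subset D^m$ since $X^0_m=Z_m+A_mX^0_{m-1}\in D^m$, the inclusions $E^{m+1}\subset E^m\subset D^m$ are evident, each vertical inclusion has codimension one, and $E^{m+1}=L(D^m)$ by Part (2), which is exactly the announced sandwich diagram; hence $\D_n$ is a special $k$-flag of length $n+1$.
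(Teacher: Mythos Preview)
Your approach is essentially the paper's: both hinge on the recursion $X^0_m=A_mX^0_{m-1}+Z_m$ and reduce the equality $[D^{m+1},D^{m+1}]=D^m$ to the invertibility of the hyperspherical Jacobian $D\hat\Phi_m$. The paper packages this by introducing the auxiliary basis $Y^j_{m-1}=\phi_{m-1}^jX^0_{m-1}+\sum_r\|\partial\phi_{m-1}/\partial\theta_{m-1}^r\|^{-1}(\partial\phi_{m-1}^j/\partial\theta_{m-1}^r)X^r_{m-1}$ of $\Delta_{m-1}$ and writing $X^0_m=\sum_j\phi_m^jY^j_{m-1}$, $[X^i_m,X^0_m]=\sum_j(\partial\phi_m^j/\partial\theta_m^i)Y^j_{m-1}$; you do the equivalent computation in the quotient $D^m/E^{m+1}$ with the orthogonal change of frame $\Psi$. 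Both arrive at the same determinant.

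There is, however, one genuine circularity in your Part~(2). You invoke ``the sandwich property for special flags'' to get $L(D^m)\subset D^{m+1}$, but that inclusion is part of the \emph{definition} of a special $k$-flag (condition~(iii)), which is precisely what the Proposition is establishing. You cannot assume it. The fix is immediate from your own Part~(3): you have shown that $\{[X^i_{m-1},X^0_{m-1}]:i=1,\dots,k\}$ are linearly independent modulo $D^m$. Take any section $V=a_0X^0_{m-1}+\sum_i a_iX^i_{m-1}+W$ of $D^m$ with $W$ a section of $E^{m+1}$; since $[W,X^0_{m-1}]$ and $[W,X^i_{m-1}]$ lie in $E^{m+1}\subset D^m$, the conditions $[V,X^0_{m-1}]\in D^m$ and $[V,X^i_{m-1}]\in D^m$ force first all $a_i=0$ and then $a_0=0$, so $V\in E^{m+1}$ and $L(D^m)=E^{m+1}$. (The paper is itself terse here, asserting that (1)--(3) ``are always true according to the definition'', but it does not make the circular appeal you do.)
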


\begin{proof}[ Proof of  Proposition \ref{speflag}]${}$

It is sufficient to show the property (4). The inclusions $[E^{m+1},D^{m+1}]\subset D^{m}$ for $m=n,\cdots,0$,  are an easy consequence of (4) and the properties (1), (2) and  (3) are always true, according to the definition  of spaces $E^m$, $D^m$ and $\D_m$.\\

Denote by  $\D_{0}$ the distribution  generated by  $\{\dis\frac{\partial}{\partial x_0^1},\cdots,\frac{\partial}{\partial x_0^{k+1}}\}$.\\ For all $m= 1, \cdots,n+1$, we have :
$$D^{m}=E^{m+1}\oplus \D_{m-1 }=D^{m+1}+\D_{m-1}.$$
$[D^{m+1},D^{m+1}]$ contains the space generated by $D^{m+1}$ and
the Lie brackets $[X^{i}_{m},X^{0}_{m}]$, for $i=1,\cdots, k$. We will show by
 induction that, in fact, they are  generating
 $[D^{m+1},D^{m+1}] $ modulo $D^{m+1}$. \\

For all $m=n,\cdots,0$, we have  $X^0_{m}=A_{m}X^0_{m-1}+Z_{m}$. It results from the  definition of $A_i$, $Z_i$ and $X^0_m$ that  $[X^i_{m},X^0_{m-1}]=0$. So we have
$$[X^{i}_{m},X^{0}_{m}]=X^i_{m}(A_{m}) X^0_{m-1}+[X^i_{m},Z_{m}].$$
For $j=1,\cdots k+1$, consider the vector fields:\\
 ${}\qquad\qquad Y^j_{m-1}=\phi_{m-1}^j X^0_{m-1}+\dis\sum_{r=1}^k {\frac{1}{||\frac{\partial\phi_{m-1}}{\partial\theta_{m-1}^r}||}}\frac{\partial{\phi_{m-1}^j}}{\partial
 {\theta_{m-1}^r}}X^r_{m-1}$ \\  If we set
  $\hat{\Phi}_{m-1}(\r,\theta_{m-1})= \r \Phi_{m-1}(\theta_{m-1})$,  then we have the relation:
  $$[Y_{m-1}]=[D\hat{\Phi}_{m-1}]^{-1} [X_{m-1}]$$
 where the vectors column $[Y_{m-1}]$  and  $[X_{m-1}]$ have $\{Y^1_{m-1}, \cdots,Y^{k+1}_{m-1}\}$ and\\ $\{X^0_{m-1},\cdots,$ $X^k_{m-1}\}$  as components respectively. It results that
  $\{Y^1_{m-1}, \cdots,Y^{k+1}_{m-1}\}$ is a basis of  $\D_{m-1}$.

 For $m=0,\cdots,n$, we note $[D\hat{\Phi}_m]$ the jacobian matrix of $\hat{\Phi}_m(\r,\theta_m)=\r\Phi_m(\theta_m)$. \\

The following decompositions occur:\\
$X_{m}^0=\dis\sum_{j=1}^{k+1}\phi_m^jY_{m-1}^j$\\
$[X^{i}_{m},X^{0}_{m}]=\dis\sum_{j=1}^{k+1}\frac{\partial{\phi_{m}^j}}{\partial {\theta_{m}^i}}Y_{m-1}^j$ for all  $m=1,\cdots,n$\\

By similar way for $D\hat{\Phi}_m$, we can show that the family of vector fields $$\{X_{m}^0,[X^{1}_{m},X^{0}_{m}],\cdots,[X^{k}_{m},X^{0}_{m}]\}$$ is also a  basis of $\D_{m-1}$. This result is also true for  $m=0$.\\
Since $D^{m+1}=E^{m+2}\oplus\D_{m}$, the space  $[D^{m+1},D^{m+1}]$ contains
$E^{m+2}$, all  vectors\\ $X_{m}^0,X_{m}^1,\cdots,$ $X_m^k$ and the Lie brackets $ [X^{1}_{m},X^{0}_{m}],\cdots,[X^{k}_{m},X^{0}_{m}]$. Also, all the Lie brackets $[X_r^j,X_m^0]$ are zero for
$r=m+1,\cdots,n$ and $j=1,\cdots,k$ since $X_m^0$ does not  depend on variables $\theta_r^j$, for $r=m+1,\cdots,n$ and $j=1,\cdots,k$.
The other Lie brackets $[X_r^j,X_m^i]$  are zero for $r=m+1,\cdots,n$ and $i,j=1,\cdots,k$.\\
Since  $\{X_{m}^0,[X^{1}_{m},X^{0}_{m}],\cdots,[X^{k}_{m},X^{0}_{m}]\}$ is a  basis of $\D_{m-1}$, then we have  $$[D^{m+1},D^{m+1}]=E^{m+1}\oplus\D_{m-1}=D^{m}$$
which completes the proof of  proposition \ref{speflag} and Theorem \ref{drap}.
\end{proof}

\begin{Com}${}$\\ Given two integers $p$ and $m$ such that  $1\leq p<m\leq n$, we can look for the motion of a "sub-induced arm", which consists of segments of the original arm between $M_{p-1}$ and  $M_{m+1}$ included.
We can then study the motion of  $M_{p-1}$ as the motion of the extremity of  this sub arm for the motion commanded by the segment $[M_{m};M_{m+1}]$. We put $h=m-p+1$, and  we write $\Pi_{p,m}$ for the  canonical projection from $\R^{k+1}\times (\mathbb{S}^k)^{n+1}$ onto $\R^{k+1}\times (\mathbb{S}^k)^{h+1}$  defined as  $\Pi_{p,m}(x,z_1,\cdots,z_{n+1})=(x_{p-1},z_{p-1},z_p,\cdots,z_m)$ where $x_{p-1}$ are the Cartesian coordinates  of $M_{p-1}$.\\

The evolution of the extremity $M_{p-1}$ of this articulated sub-arm , controlled by the movement of  $[M_{m};M_{m+1}]$, is a  solution of the following  differential system (with  notations of Theorem \ref{system}):
\begin{eqnarray}\label{soussystemcont}
\begin{cases}\dot{x}_{p-1}^1= v_{p-1}\phi_{p-1}^1\cr
\dot{x}_{p-1}^2=v_{p-1}\phi_{p-1}^2\cr
\qquad\cdots\cr
\dot{x}_{p-1}^{k+1}=v_{p-1}\phi_{p-1}^{k+1}\cr
\dot{\theta}_{p-1}^1=v_{p}B_{p}^1\cr
\qquad\cdots\cr
\dot{\theta}_{p-1}^k=v_{p}B_{p}^k\cr
\qquad\cdots\\cr
\dot{\theta}_{i}^1=v_{i+1}B_{i+1}^1\cr
\qquad\cdots\cr
\dot{\theta}_{i}^k=v_{i+1}B_{i+1}^k\cr
\qquad\cdots\cr
\dot{\theta}_{m-1}^1=v_mB_{m}^1\cr
\qquad\cdots\cr
\dot{\theta}_{m-1}^k=v_mB_{m}^k\cr
\dot{\theta}_m^1=v_{\theta_m^1}=v_{m+1}B_{m+1}^1 \cr
\qquad\cdots\cr
\dot{\theta}_m^k=v_{\theta_m^k}=v_{m+1}B_{m+1}^k\cr
\end{cases}
\end{eqnarray}

It is a controlled system on  $\R^{k+1}\times(\mathbb{S}^k)^{h+1}$ $(h=m-p+1)$ :
$$\dot{\hat{q}}=u_0\hat{X}^0_h+\dis\sum_{i=1}^k u_i{X}^i_h$$
 with controls
$u_0=v_m$, and  $u_i=v_{\theta_m^i}$, for $i=1,\cdots,k$ \\
$\hat{q}=\Pi_{p,m}(q)$ \\
$\hat{X}_h^0=\dis\sum_{i=p}^m f^i_mZ_i+f^{p-1}_m\hat{Z}_{p-1}$ et $\hat{Z}_{p-1}=\dis\sum_{l=1}^k\phi_{p-1}^l\frac{\partial}{\partial x_{p-1}^l}$\\
We denote by $\hat{\D}_h$ the distribution generated by $\hat{X}_h^0$ and $X_m^1,\cdots,X_m^k$.\\
\end{Com}
 This comment will be used in a future paper about singular sets of special flags and their interpretations in terms of singularities kinematic evolution of an articulated arm.

\bigskip
\bigskip

\end{document}